\numberwithin{equation}{section} \theoremstyle{plain}
\newtheorem{thm}{Theorem}[section]
\newtheorem{lem}[thm]{Lemma}
\newtheorem{cor}[thm]{Corollary}
\newtheorem{rem}[thm]{Remark}
\newtheorem{ack}{Acknowledgements}   
\def\<{\langle}
\def\>{\rangle}
\def\({\left(}
\def\){\right)}
\def\[{\left[}
\def\]{\right]}
\def\tr{\mathop{\text{tr}}}
\def\diag{\mathop{\text{diag}}}
\def\rank{\mathop{\text{rank}}}
\def\Re{\mathop{\text{Re}}}
\title{Some generalizations of the DDVV  and  BW inequalities}
\author[J.Q. Ge]{Jianquan Ge}
\address{School of Mathematical Sciences, Laboratory of Mathematics and Complex Systems, Beijing Normal University, Beijing 100875, P.R. CHINA.}
\email{jqge@bnu.edu.cn}
\author[F.G. Li]{Fagui Li}
\email{faguili@mail.bnu.edu.cn}
\author[Y. Zhou]{Yi Zhou}
\email{zhou\_yi@mail.bnu.edu.cn}
\subjclass[2010]{15A45, 15B57, 53C42.}
\date{}
\keywords{DDVV inequality; B\"{o}ttcher-Wenzel inequality; Erd\H{o}s-Mordell inequality; Commutator.}
\thanks{The first author is partially supported by Beijing Natural Science Foundation (No. Z190003), NSFC (No. 11522103,
11331002).}
\begin{document}
\maketitle

\begin{abstract}
In this paper we generalize the known DDVV-type inequalities for real (skew-)symmetric and complex (skew-)Hermitian matrices to arbitrary real, complex and quaternionic matrices.
Inspired by the Erd\H{o}s-Mordell inequality, we establish the DDVV-type inequalities for matrices in the subspaces spanned by a Clifford system or a Clifford algebra.
We also generalize the B\"{o}ttcher-Wenzel inequality to quaternionic matrices.
\end{abstract}

\section{Introduction}\label{sec-int}
The DDVV-type inequality originates from the normal scalar curvature conjecture in submainfold geometry.
In 1999, De Smet, Dillen, Verstraelen and Vrancken \cite{DDVV99} proposed the normal scalar curvature conjecture (DDVV conjecture):
Let $M^n\rightarrow N^{n+m}(\kappa)$ be an isometric immersed $n$-dimensional submanifold in the real space form with constant sectional curvature $\kappa$.
Then there is the pointwise inequality $$\rho+\rho^\bot\leq\|H\|^2+\kappa, $$
where $\rho$ is the scalar curvature (intrinsic invariant), $H$ is the mean curvature vector field and $\rho^\bot$ is the normal scalar curvature (extrinsic invariants).
F. Dillen, J. Fastenakels and J. Veken \cite{DFV07} transformed this conjecture into an equivalent algebraic version (DDVV inequality):
Let $B_1, \cdots, B_m$ be $n\times n$ real symmetric matrices, then
$$\sum^m_{r,s=1}\|[B_r,B_s]\|^2\leq\(\sum^m_{r=1}\|B_r\|^2\)^2,$$
where $[A,B]=AB-BA$ is the commutator and $\|B\|^2=\tr(BB^t)$ is the squared Frobenius norm.
There were many researches on the DDVV conjecture (cf. \cite{DHTV04, Lu07, CL08, GT11} etc.);
Lu \cite{Lu11} and Ge-Tang \cite{GT08} finally proved the DDVV inequality independently and differently.
Submanifolds achieving the equality everywhere are called Wintgen ideal submanifolds, which are not classified so far (cf. \cite{CL08, DT09, XLMW14, GTY16}). Besides of this original geometric background, the DDVV inequality has also many important applications, for example, in deriving Simons-type inequalities and pinching results for the second fundamental form in submanifold geometry (cf. \cite{Lu11, LW16, GX12}).

A DDVV-type inequality is an optimal estimate of how big the pairwise commutators between a series of certain $n\times n$ matrices $B_1, \cdots, B_m$ can be:
\begin{equation}\label{DDVVtypeineq}
\sum^m_{r,s=1}\|[B_r,B_s]\|^2\leq c\(\sum^m_{r=1}\|B_r\|^2\)^2,
\end{equation}
which retains the form of the DDVV inequality.
We are interested in the optimal smallest constant $c$ so that (\ref{DDVVtypeineq}) stays valid for all matrices in the regarded class. Ge \cite{Ge14} proved the DDVV-type inequality for real skew-symmetric matrices, and applied it to get a Simons-type inequality for Yang-Mills fields in Riemannian submersion geometry. Ge-Xu-You-Zhou \cite{GXYZ17} extended the DDVV-type inequalities from real symmetric and skew-symmetric matrices to Hermitian and skew-Hermitian matrices. In Section \ref{sec2} of this paper, by using the DDVV-type inequalities for real symmetric and skew-symmetric matrices,  we will firstly give a much simpler proof of the DDVV-type inequality for (skew-)Hermitian matrices. The idea is to divide the Hermitian matrices into real symmetric matrices as real part and skew-symmetric matrices as imaginary part. It turns out that for complex (skew-)symmetric matrices the optimal constant $c$ is the same as in the real case. This new technique will also be used to prove the DDVV-type inequality for general complex matrices (and thus general real matrices) by dividing complex matrices into Hermitian matrices and skew-Hermitian matrices. The optimal constant $c$ in this case is $\frac{4}{3}$ when $m\geq3$. We summarize the DDVV-type inequalities mentioned above by indicating the optimal constant $c$ with respect to the types of the matrices in the following Table \ref{table-1} (see \cite{GT08, Ge14, GXYZ17},  Theorem \ref{TcDDVV} and Corollaries \ref{cor-sym}, \ref{cor-skew} in Section \ref{sec2} for equality conditions).
\begin{table}[h]
\caption{The optimal constant $c$ of DDVV-type inequalities ($m\geq3$)} \label{table-1}
\centering
\begin{tabular}{|c|c|c|}
\hline
c &real &complex\\
\hline
symmetric & $1$ &$1$\\
\hline
skew-symmetric &$\frac13(n=3), \frac23(n\geq4)$ &$\frac13(n=3), \frac23(n\geq4)$ \\
\hline
Hermitian & --  &$\frac43$\\
\hline
skew-Hermitian & -- &$\frac43$\\
\hline
general &$\frac43$ &$\frac43$\\
\hline
\end{tabular}
\end{table}

When $m=2$, the DDVV inequality can be derived from the B\"{o}ttcher-Wenzel inequality (BW inequality for short):
Let $X, Y$ be arbitrary real matrices, then $$\|\[X,Y\]\|^2\leq 2\|X\|^2\|Y\|^2. $$
The BW inequality was conjectured by B\"{o}ttcher-Wenzel \cite{BW05} and proved by B\"{o}ttcher-Wenzel \cite{BW08} ,
Vong-Jin \cite{VJ08} and Lu \cite{Lu11, Lu12} in many different ways.
B\"{o}ttcher-Wenzel \cite{BW08} also extended the BW inequality from real matrices to complex matrices,
and Cheng-Vong-Wenzel \cite{CVW10} obtained the equality condition.
Lu-Wenzel \cite{LW17} summarized the relevant results and conjectured a unified generalization of the DDVV inequality and the BW inequality. See also \cite{GLLZ20} for equivalent characterizations of the Lu-Wenzel conjecture and some partial results.
In Section \ref{sec3}, we generalize the BW inequality and the DDVV inequality from complex matrices to quaternionic matrices by using the same technique mentioned before: dividing quaternionic matrices into complex matrices by the standard homomorphism. We summarize these inequalities for general real, complex and quaternionic matrices in the following Table \ref{Table2}
(see \cite{CVW10}, Section \ref{sec2} and Theorems \ref{TqBW}, \ref{TqBW2}, \ref{TqDDVV} in Section \ref{sec3} for equality conditions).
\begin{table}[h!]
\caption{The optimal constant $c$ of DDVV-type and BW-type inequalities}\label{Table2}
\centering
\begin{tabular}{|c|c|c|c|}
\hline
c &real &complex &quaternionic\\
\hline
DDVV$(m\geq3)$ &$\frac43$ &$\frac43$ &$\frac83$\\
\hline
DDVV$(m=2)$ &1 &1 &2\\
\hline
BW &2 &2 &4\\
\hline
\end{tabular}
\end{table}

In 2016 Z. Lu pointed out to us that the DDVV-type inequality for real skew-symmetric matrices is in some sense a generalization of the Erd\H{o}s-Mordell inequality (cf. \cite{E35, MB37, Ba58, Le61, DNP16, MM17}, etc.): Let $P$ be an interior point of a triangle $\triangle ABC$ and $P_A,P_B,P_C$ be its projection points to the three edges, then the sum of the distances to the three vertices is no less than twice of the sum of the distances to the three edges, i.e., $$2(PP_A+PP_B+PP_C)\leq PA+PB+PC.$$
The Erd\H{o}s-Mordell inequality is implied by the following inequality for $n=3$ (cf. \cite{GS05}):
Let $a_1,\cdots,a_n\in \mathbb{R}$ and let $\alpha_1+\cdots+\alpha_n=\pi$, then
$$\sec\frac{\pi}{n}~(a_1a_2\cos\alpha_1+\cdots+a_{n-1}a_n\cos\alpha_{n-1}+a_na_1\cos\alpha_n)\leq a_1^2+\cdots+a_n^2.$$
 Considering the natural isomorphism between $\mathbb{R}^3$ and the space $o(3)$ of $3\times3$ real skew-symmetric matrices, the Erd\H{o}s-Mordell inequality is exactly the DDVV-type inequality for real skew-symmetric matrices restricted to a $2$-dimensional subspace of $o(3)$ which has a smaller optimal constant $c=\frac{1}{4}$. In fact, taking $x_1=\overrightarrow{PA},x_2=\overrightarrow{PB},x_3=\overrightarrow{PC}\in\mathbb{R}^2\subset \mathbb{R}^3$ (corresponding to $X_1,X_2,X_3\in V\subset o(3)$ where $V$ is a $2$-dimensional subspace) with norms $a_1,a_2,a_3$ and pairwise angles $\frac{\pi}{2}+\frac{\alpha_3}{2},\frac{\pi}{2}+\frac{\alpha_2}{2},\frac{\pi}{2}+\frac{\alpha_1}{2}$, the Erd\H{o}s-Mordell inequality is implied by the following stronger DDVV-type inequality
 $$\sum^3_{r,s=1}\|[X_r,X_s]\|^2\leq \frac{1}{4}\(\sum^3_{r=1}\|X_r\|^2\)^2, \quad X_1,X_2,X_3\in V\subset o(3).$$
 Motivated by this phenomenon, we are interested in such stronger DDVV-type inequalities when the matrices are restricted to some subspaces of the regarded type.
 In Section \ref{sec4} of this paper, we establish such stronger DDVV-type inequalities when the matrices $B_1, \cdots, B_M$ are in the subspace $V_{Cs}$ (resp. $V_{Ca}$) of the space $SM(2l,\mathbb{R})$ of real symmetric $(2l\times2l)$ matrices (resp. of the space $o(l)$ of real skew-symmetric $(l\times l)$ matrices) spanned by a Clifford system $(P_0,\cdots,P_m)$ on $\mathbb{R}^{2l}$ (resp. by a Clifford algebra $(E_1,\cdots,E_{m-1})$ on $\mathbb{R}^l$). Now the optimal constant $c$ is $\frac2l\(1-\frac1N\)$
($N=\min\{m+1, M\}$) for the case of Clifford system, and is $\frac4l\(1-\frac1N\)$ ($N=\min\{m-1, M\}$) for the case of Clifford algebra (see Theorems \ref{TCsDDVV}, \ref{TCaDDVV} in Section \ref{sec4} for equality conditions).

To illustrate the number $c$ more explicitly, we briefly introduce the representation theory of Clifford algebra (cf. \cite{FKM81}). A Clifford system on $\mathbb{R}^{2l}$ can be represented by real symmetric orthogonal matrices $P_0,\cdots,P_m\in O(2l)$ satisfying $P_iP_j+P_jP_i=2\delta_{ij}I_{2l}$; a Clifford algebra on $\mathbb{R}^l$ can be represented by real skew-symmetric orthogonal matrices $E_1,\cdots,E_{m-1}\in O(l)$ satisfying $E_iE_j+E_jE_i=-2\delta_{ij}I_{l}$; they are one-to-one correspondent by setting
\begin{equation*}
P_0=\begin{pmatrix}
    I_l &  0 \\
     0 & -I_l
\end{pmatrix}, \quad
P_1=\begin{pmatrix}
  0 & I_l    \\
 I_l & 0
\end{pmatrix}, \quad
P_{\alpha+1}=\begin{pmatrix}
  0 & E_\alpha    \\
 -E_\alpha & 0
\end{pmatrix}, \quad \alpha=1,\cdots,m-1.
\end{equation*}
A Clifford system $(P_0,\cdots,P_m)$ on $\mathbb{R}^{2l}$ (resp. Clifford algebra $(E_1,\cdots,E_{m-1})$  on $\mathbb{R}^l$) can be decomposed into a direct sum of $k$ irreducible Clifford systems (resp. Clifford algebras) on $\mathbb{R}^{2\delta(m)}$ (resp. on $\mathbb{R}^{\delta(m)}$) with $l=k\delta(m)$ for $k,m\in\mathbb{N}$, where the irreducible dimension $\delta(m)$ satisfies $\delta(m+8)=16\delta(m)$ and can be listed in the following Table \ref{table3}. This means that under a uniform congruence $(Q^tP_0Q,\cdots,Q^tP_mQ)$ for some $Q\in O(2l)$, each $P_\alpha$ can be deformed into $Q^tP_\alpha Q=\diag\(P_{\alpha 1},\cdots, P_{\alpha k}\)$ with $(P_{0i},\cdots, P_{mi})$ $(i=1,\cdots,k)$ being irreducible Clifford systems on $\mathbb{R}^{2\delta(m)}$ of the same irreducible dimension. For instance, given a Clifford system $$P:= \(\begin{pmatrix}P_0^A & \\ & P_0^B\end{pmatrix},\begin{pmatrix}P_1^A
 & \\ & P_1^B\end{pmatrix},\begin{pmatrix}P_2^A & \\ &
 P_2^B\end{pmatrix}\in O(2\cdot 6)\)$$ on $\mathbb{R}^{2\cdot 6}$ with $m=2$, where $\(P_0^A,P_1^A,P_2^A\in O(2\cdot 2)\)$, $ \(P_0^B,P_1^B,P_2^B,P_3^B\in O(2\cdot 4)\)$ are Clifford systems on $\mathbb{R}^{2\cdot 2}$ with $m=2$ and on $\mathbb{R}^{2\cdot4}$ with $m=3$ respectively, there exists a uniform congruence to decompose $P$ into $k=3$ direct summands of irreducible Clifford systems on $\mathbb{R}^{2\cdot 2}$. Hence we need only consider such irreducible decompositions of Clifford systems or algebras with $l=k\delta(m)$, since the DDVV-type inequalities are invariant under uniform congruences. 
\begin{table}[h!]
\caption{Dimension $\delta(m)$ of irreducible representation of Clifford algebra}\label{table3}
\centering
\begin{tabular}{|c|c|c|c|c|c|c|c|c|c|}
\hline
$m$ & $1$ & $2$ & $3$ & $4$ & $5$ & $6$ & $7$ & $8$ & $\cdots~ m+8$\\
\hline
$\delta(m)$& $1$ &$2$ &$4$ &$4$ & $8$ & $8$ & $8$ & $8$ & $\cdots~ 16\delta(m)$\\
\hline
\end{tabular}
\end{table}

When the number $M$ of matrices is smaller than $m+1$ (resp. $m-1$), one can regard the matrices as lying in a subspace spanned by the Clifford system $(P_0,\cdots,P_{M-1})$ (resp. Clifford algebra $(E_1,\cdots,E_{M})$). Hence, only the optimal constant $c$ in the case when $M\geq m+1=N$ (resp. $M\geq m-1=N$) is of essential meaning. We summarize for this case the number $c$ with respect to the two natural numbers $k,m\in\mathbb{N}$ with $l=k\delta(m)$ in the following Table \ref{table4} for Clifford system and Table \ref{table5} for Clifford algebra.
\begin{table}[h!]
\caption{The optimal constant $c$ of DDVV-type inequalities for Clifford system}\label{table4}
\centering
\begin{tabular}{|c|c|c|c|c|c|c|c|c|c|}
\hline
$k~\backslash~ c~/~m$ & $1$ & $2$ & $3$ & $4$ & $5$ & $6$ & $7$ & $8$ & $\cdots~ m+8$\\
\hline
$1$& $1$ &$\frac{2}{3}$ &$\frac{3}{8}$ &$\frac{2}{5}$ & $\frac{5}{24}$ & $\frac{3}{14}$ & $\frac{7}{32}$ & $\frac{2}{9}$ & $\cdots~ \frac{m}{8(m+1)\delta(m)}$\\
\hline
$2$& $\frac{1}{2}$ &$\frac{1}{3}$ &$\frac{3}{16}$ &$\frac{1}{5}$ & $\frac{5}{48}$ & $\frac{3}{28}$ & $\frac{7}{64}$ & $\frac{1}{9}$ & $\cdots~ \frac{m}{16(m+1)\delta(m)}$\\
\hline
$\vdots$& $\vdots$ &$\vdots$ &$\vdots$ &$\vdots$ & $\vdots$ & $\vdots$& $\vdots$ & $\vdots$ & $\vdots$\\
\hline
$k$& $\frac{1}{k}$ &$\frac{2}{3k}$ &$\frac{3}{8k}$ &$\frac{2}{5k}$ & $\frac{5}{24k}$ & $\frac{3}{14k}$ & $\frac{7}{32k}$ & $\frac{2}{9k}$ & $\cdots~ \frac{m}{8k(m+1)\delta(m)}$\\
\hline
\end{tabular}
\end{table}
\begin{table}[h!]
\caption{The optimal constant $c$ of DDVV-type inequalities for Clifford algebra}\label{table5}
\centering
\begin{tabular}{|c|c|c|c|c|c|c|c|c|c|}
\hline
$k~\backslash~ c~/~m$ & $1$ & $2$ & $3$ & $4$ & $5$ & $6$ & $7$ & $8$ & $\cdots~ m+8$\\
\hline
$1$& -- &$0$ &$\frac{1}{2}$ &$\frac{2}{3}$  & $\frac{3}{8}$  & $\frac{2}{5}$ & $\frac{5}{12}$ & $\frac{3}{7}$ & $\cdots~ \frac{m-2}{4(m-1)\delta(m)}$\\
\hline
$2$& -- &$0$ &$\frac{1}{4}$ &$\frac{1}{3}$ & $\frac{3}{16}$ & $\frac{1}{5}$ & $\frac{5}{24}$ & $\frac{3}{14}$ & $\cdots~ \frac{m-2}{8(m-1)\delta(m)}$\\
\hline
$\vdots$& $\vdots$ &$\vdots$ &$\vdots$ &$\vdots$ & $\vdots$ & $\vdots$& $\vdots$ & $\vdots$ & $\vdots$\\
\hline
$k$& -- &$0$ &$\frac{1}{2k}$ &$\frac{2}{3k}$  & $\frac{3}{8k}$  & $\frac{2}{5k}$ & $\frac{5}{12k}$ & $\frac{3}{7k}$ & $\cdots~ \frac{m-2}{4k(m-1)\delta(m)}$\\
\hline
\end{tabular}
\end{table}

It seems that the lists of the optimal constant $c$ would possibly have some links with random matrix theory or quantum physics. However, it is just our naive and wild guess since we know nothing about that. To conclude this section, we would like to mention more about possible future studies on DDVV-type inequalities:
\begin{itemize}
\item[(1)] What is the expectation of the commutators of random matrices in certain categories like GOE, GUE, and GSE?
\item[(2)] Find more DDVV-type inequalities for matrices, Lie algebras or operators lying in certain subspaces of special interest like spaces of austere matrices (see for a special example in \cite{GTY16}).
\item[(3)] Whether the nonnegative polynomial defined by the DDVV-type inequalities (by $F(B_1,\cdots,B_m):=RHS-LHS$) is a sum of squares of quadratic forms on the matrices in the regarded types? This would provide more examples on Hilbert's 17th problem (see other examples constructed by Clifford systems in \cite{GT18}).
\end{itemize}

\section{DDVV-type inequality for complex matrices}\label{sec2}
We have already known the DDVV-type inequality for (skew-)Hermitian matrices
and its equality condition (cf. \cite{GXYZ17}). In this section, we firstly give a simpler proof of this result to illustrate our main technique of this paper.
\begin{thm}\label{Thm-Hermitian}\cite[Theorem 1.1]{GXYZ17}
Let $B_1,\cdots,B_m$ be $n\times n$ Hermitian matrices $(n\geq2,~ m\geq3)$.
Then
\begin{equation}\label{HDDVV}
\sum^m_{r,s=1}\|\[B_r,B_s\]\|^2\leq \frac43\(\sum^m_{r=1}\|B_r\|^2\)^2.
\end{equation}
\end{thm}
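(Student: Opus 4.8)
The plan is to decompose each Hermitian matrix into its real and imaginary parts and apply the already-known DDVV-type inequalities for real symmetric and real skew-symmetric matrices. Specifically, writing $B_r = S_r + \sqrt{-1}\,A_r$ with $S_r$ real symmetric and $A_r$ real skew-symmetric (here $S_r=\frac{1}{2}(B_r+\overline{B_r})$, $A_r=\frac{1}{2\sqrt{-1}}(B_r-\overline{B_r})$, and the Hermitian condition $B_r^\ast=B_r$ forces $S_r^t=S_r$, $A_r^t=-A_r$), one has $\|B_r\|^2 = \|S_r\|^2 + \|A_r\|^2$ because the trace of the cross term $S_rA_r^t$ vanishes (a symmetric times a skew-symmetric matrix is traceless). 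The commutator splits as $[B_r,B_s] = \big([S_r,S_s]+[\sqrt{-1}A_r,\sqrt{-1}A_s]\big) + \big([S_r,\sqrt{-1}A_s]+[\sqrt{-1}A_r,S_s]\big) = \big([S_r,S_s]-[A_r,A_s]\big) + \sqrt{-1}\big([S_r,A_s]+[A_r,S_s]\big)$, so its real part is real symmetric-commutator-type and its imaginary part is real skew-symmetric-commutator-type, giving $\|[B_r,B_s]\|^2 = \|[S_r,S_s]-[A_r,A_s]\|^2 + \|[S_r,A_s]+[A_r,S_s]\|^2$.

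Next I would sum over $r,s$ and estimate the two resulting sums separately. For the first sum, I would enlarge the collection $\{S_r\}\cup\{A_r\}$ into a single family of $2m$ real matrices and note that $[S_r,S_s]-[A_r,A_s]$ together with the mixed terms are all among the pairwise commutators of the enlarged family; more precisely, consider the block construction that places the $S_r$ and $A_r$ into symmetric matrices of size $2n$ so that the real part and imaginary part of $[B_r,B_s]$ appear simultaneously as commutators within one symmetric family — this is the standard trick turning a Hermitian $n\times n$ matrix into a symmetric $2n\times 2n$ matrix via $B_r\mapsto \begin{pmatrix} S_r & -A_r\\ A_r & S_r\end{pmatrix}$. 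Under this map the Frobenius norm doubles, $\big\|\begin{pmatrix} S_r & -A_r\\ A_r & S_r\end{pmatrix}\big\|^2 = 2\|B_r\|^2$, and the commutator maps to $\begin{pmatrix} \Re[B_r,B_s] & -\Im[B_r,B_s]\\ \Im[B_r,B_s] & \Re[B_r,B_s]\end{pmatrix}$, whose squared norm is $2\|[B_r,B_s]\|^2$. Then the real symmetric DDVV inequality (optimal constant $1$, from the table/\cite{GT08}) applied to these $m$ symmetric $2n\times 2n$ matrices gives $\sum_{r,s} 2\|[B_r,B_s]\|^2 \leq \big(\sum_r 2\|B_r\|^2\big)^2 = 4\big(\sum_r\|B_r\|^2\big)^2$, i.e. $\sum_{r,s}\|[B_r,B_s]\|^2 \leq 2\big(\sum_r\|B_r\|^2\big)^2$. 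This only yields constant $2$, which is weaker than $\tfrac{4}{3}$, so the naive embedding is not enough.

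To reach the sharp constant $\tfrac{4}{3}$ I would instead keep the real and imaginary parts of the commutator genuinely separate and apply the \emph{complex symmetric} DDVV inequality, which according to the table has the same optimal constant $1$ as the real symmetric case. The point is that $R_r := S_r + \sqrt{-1} A_r = B_r$ is already a complex symmetric matrix exactly when... no — rather, one should observe $S_r - \sqrt{-1}A_r$ behaves well, or work with the $2m$ real symmetric matrices obtained by a cleverer pairing. Concretely, the approach is: the map $B_r\mapsto B_r$ viewed inside the space of complex matrices, combined with the identity above for $\|[B_r,B_s]\|^2$, reduces the problem to bounding $\sum\|[S_r,S_s]-[A_r,A_s]\|^2 + \sum\|[S_r,A_s]+[A_r,S_s]\|^2$, and I would recognize the matrix $T_r := \begin{pmatrix} S_r & A_r \\ A_r & -S_r\end{pmatrix}$ (or a similar involution-twisted block) as real \emph{symmetric} of size $2n$ with $\|T_r\|^2 = 2\|B_r\|^2$, whose commutators $[T_r,T_s] = \begin{pmatrix} [S_r,S_s]+[A_r,A_s] & \ast \\ \ast & \ast\end{pmatrix}$ reproduce both scalar combinations; checking that $\sum_{r,s}\|[T_r,T_s]\|^2$ equals precisely $\tfrac{8}{3}$-compatible... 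The cleanest route, and the one I expect to work, is to apply the real symmetric DDVV inequality not to $m$ matrices of doubled size but to the $2m$ real matrices $\{S_1,\dots,S_m,A_1,\dots,A_m\}$ embedded as symmetric matrices in a way that makes $\sum_{r,s}\|[B_r,B_s]\|^2$ a \emph{subsum} of $\tfrac{1}{?}\sum_{\text{all pairs}}\|[\cdot,\cdot]\|^2$, then to note that the extra commutators $[S_r,S_s]+[A_r,A_s]$ etc. that get added are controlled, and finally to track constants carefully.

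The main obstacle, and where real care is needed, is precisely this constant-chasing: a crude embedding gives only $c=2$, so one must exploit the finer structure — either the sharper complex-symmetric constant $1$ (which itself was established via the real case and the $2n\times 2n$ embedding) applied to the matrices $B_r$ directly, or an averaging over a one-parameter family $S_r\cos\theta + \sqrt{-1}A_r\sin\theta$ of Hermitian matrices and optimizing — to squeeze the constant down to $\tfrac{4}{3}$. I expect the actual argument in \cite{GXYZ17}, and the one to reproduce here, to use the real (skew-)symmetric DDVV inequalities for the $2n\times 2n$ symmetric matrices $\widehat{B_r}=\begin{pmatrix} S_r & -A_r\\ A_r & S_r\end{pmatrix}$ and the observation that these particular matrices are \emph{not arbitrary} symmetric matrices but lie in the commutant of a fixed complex structure $J$, so the DDVV inequality restricted to that subspace has the improved constant $\tfrac{4}{3}\cdot\tfrac{1}{2}\cdot 2 = \tfrac{4}{3}$ — making this an instance of the subspace phenomenon that motivates Section \ref{sec4}. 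Verifying that the commutant-of-$J$ subspace indeed yields exactly constant $\tfrac{4}{3}$ (rather than something larger) is the crux, and it reduces, after block-diagonalizing, to the known complex Hermitian eigenvalue estimates already in hand.
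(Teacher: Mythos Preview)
Your strategy is right in spirit but you chose the wrong target for the real embedding, and that is precisely why you get stuck at $c=2$.

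The standard realization $\widehat{B_r}=\begin{pmatrix} S_r & -A_r\\ A_r & S_r\end{pmatrix}$ sends a Hermitian matrix to a \emph{symmetric} $2n\times 2n$ matrix, and the symmetric DDVV constant $1$ then only gives $c=2$. But $\widehat{B_r}$ commutes with the complex structure $J=\begin{pmatrix}0&I_n\\-I_n&0\end{pmatrix}$, so $\Phi(B_r):=J\widehat{B_r}=\begin{pmatrix} A_r & S_r\\ -S_r & A_r\end{pmatrix}$ is \emph{skew-symmetric}. Moreover $\|\Phi(B_r)\|^2=\|\widehat{B_r}\|^2=2\|B_r\|^2$ and, because $J$ commutes with each $\widehat{B_r}$,
\[
[\Phi(B_r),\Phi(B_s)]=J\widehat{B_r}J\widehat{B_s}-J\widehat{B_s}J\widehat{B_r}=J^2[\widehat{B_r},\widehat{B_s}]=-[\widehat{B_r},\widehat{B_s}],
\]
so $\|[\Phi(B_r),\Phi(B_s)]\|^2=\|[\widehat{B_r},\widehat{B_s}]\|^2=2\|[B_r,B_s]\|^2$. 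Now apply the real \emph{skew}-symmetric DDVV inequality \cite{Ge14} for $2n\times 2n$ matrices (optimal constant $\tfrac{2}{3}$ since $2n\geq4$):
\[
2\sum_{r,s}\|[B_r,B_s]\|^2=\sum_{r,s}\|[\Phi(B_r),\Phi(B_s)]\|^2\leq \tfrac{2}{3}\Big(\sum_r\|\Phi(B_r)\|^2\Big)^2=\tfrac{2}{3}\cdot 4\Big(\sum_r\|B_r\|^2\Big)^2,
\]
which is exactly $\tfrac{4}{3}$. This is what the paper does (with $\Phi$ defined directly rather than via $J\widehat{\cdot}$).

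Your later suggestions do not close the gap: $T_r=\begin{pmatrix}S_r&A_r\\A_r&-S_r\end{pmatrix}$ is neither symmetric nor skew-symmetric; the family $\{S_1,\dots,S_m,A_1,\dots,A_m\}$ mixes symmetric and skew-symmetric matrices, for which no single DDVV constant from the table applies; and the ``commutant of $J$'' idea is correct intuition but you never exploit it --- the exploitation is simply to left-multiply by $J$, converting the symmetric image to a skew-symmetric one and unlocking the sharper constant $\tfrac{2}{3}$.
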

\begin{proof}
The main idea is to realize the complex matrices by real matrices and use the known results of the DDVV-type inequalities.
Let
$$\begin{aligned}
\Phi: M(n,\mathbb{C})&\longrightarrow M(2n,\mathbb{R}),\\
X=X_1+X_2\mathbf{i}&\longmapsto
\begin{pmatrix}
X_2 & X_1 \\
-X_1 & X_2 \\
\end{pmatrix},
\end{aligned}$$
where $X_1, X_2 \in M(n,\mathbb{R})$.
For $X, Y \in M(n,\mathbb{C})$, One can verify directly that
\begin{equation}\label{HDDVV2}
\|\Phi(X)\|^2=2\|X\|^2,
\end{equation}
\begin{equation}\label{HDDVV3}
\|\Phi(-\mathbf{i}X)\|^2=\|\Phi(X)\|^2,
\end{equation}
\begin{equation}\label{HDDVV4}
\[\Phi(X),\Phi(Y)\]=\Phi(-\mathbf{i}\[X,Y\]).
\end{equation}

As $B_r$ is Hermitian means that for every $1\leq r \leq m$,
all the $\Phi(B_r)$ are skew-symmetric.
Therefore we can use the known DDVV-type inequality for real skew-symmetric matrices (for $2n\geq4$ and $m\geq3$):
\begin{equation}\label{HDDVV5}
\sum^m_{r,s=1}\|\[\Phi(B_r),\Phi(B_s)\]\|^2
\leq \frac23\(\sum^m_{r=1}\|\Phi(B_r)\|^2\)^2.
\end{equation}
It follows from (\ref{HDDVV2}), (\ref{HDDVV3}), (\ref{HDDVV4}) and (\ref{HDDVV5}) that
$$\begin{aligned}
\sum^m_{r,s=1}\|\[B_r,B_s\]\|^2 &=\frac12\sum^m_{r,s=1}\|\Phi(\[B_r,B_s\])\|^2\\
&=\frac12\sum^m_{r,s=1}\|\[\Phi(B_r),\Phi(B_s)\]\|^2\\
&\leq \frac13\(\sum^m_{r=1}\|\Phi(B_r)\|^2\)^2=\frac43\(\sum^m_{r=1}\|B_r\|^2\)^2.
\end{aligned}$$
The proof is complete.
\end{proof}

\begin{rem}\label{rem-HDDVV}
When $B_1,\cdots,B_m$ consist of Hermitian matrices and skew-Hermitian matrices,
$(\ref{HDDVV})$ still holds. This is because that the norms are invariant under the multiplication by $\mathbf{i}$,
and a skew-Hermitian matrix multiplying $\mathbf{i}$ is a Hermitian matrix.
\end{rem}

To describe the equality condition in the next theorem, we put $K(n, m):=U(n)\times O(m)$.
A $K(n, m)$ action on a family of matrices $(A_1,\cdots,A_m)$ is given by
$$(P,R)\cdot(A_1,\cdots,A_m):=\left(\sum_{j=1}^mR_{j1}P^*A_jP,\cdots,\sum_{j=1}^mR_{jm}P^*A_jP\right),$$
for $(P,R)\in K(n, m)$,
where $R=(R_{jk})\in O(m)$ acts as a rotation on the matrix tuple $(P^*A_1P,\cdots,P^*A_mP)$, $P^*=\overline{P}^t$ is the conjugate transpose.
Using the technique in the previous proof, we obtain the DDVV-type inequality for general complex matrices.
\begin{thm}\label{TcDDVV}
Let $B_1,\cdots,B_m$ be arbitrary $n\times n$ complex matrices $(n\geq2)$.
\begin{enumerate}
\item If $m\geq3$, then
\begin{equation}\label{cDDVV}
\sum^m_{r,s=1}\|\[B_r,B_s\]\|^2\leq \frac43\(\sum^m_{r=1}\|B_r\|^2\)^2.
\end{equation}
For $1\leq r \leq m$, let $B_r^1=\frac12(B_r+B_r^*), B_r^2=\frac12(B_r-B_r^*)$.
The equality holds if and only if $\sum^m_{r=1}\[B_r,B_r^*\]=0$ and
there exists a $(P,R)\in K(n, 2m)$ such that
$$(P,R)\cdot(B_1^1, \cdots, B_m^1, \mathbf{i}B_1^2, \cdots, \mathbf{i}B_m^2)=
(\diag(H_1,0), \diag(H_2,0), \diag(H_3,0), 0, \cdots, 0),$$ where for some $\lambda\geq0$,
$$H_1:=
\begin{pmatrix}
\lambda & 0 \\
0 & -\lambda \\
\end{pmatrix}, \quad
H_2:=
\begin{pmatrix}
0 & \lambda \\
\lambda & 0 \\
\end{pmatrix},\quad
H_3:=
\begin{pmatrix}
0 & -\lambda\mathbf{i} \\
\lambda\mathbf{i} & 0 \\
\end{pmatrix}.
$$

\item If $m=2$, then
$$\sum^2_{r,s=1}\|\[B_r,B_s\]\|^2\leq \(\sum^2_{r=1}\|B_r\|^2\)^2.$$
The equality holds if and only if there exists a unitary matrix $U$
such that $B_1=U^*\diag\(\widetilde{B_1}, 0\)U, B_2=U^*\diag\(\widetilde{B_2}, 0\)U$,
where $\widetilde{B_1}, \widetilde{B_2}\in M(2, \mathbb{C})$
with $\left\|\widetilde{B_1}\right\|=\left\|\widetilde{B_2}\right\|,
\left\<\widetilde{B_1}, \widetilde{B_2}\right\>=0, \tr\(\widetilde{B_1}\)=\tr\(\widetilde{B_2}\)=0$.
\end{enumerate}
\end{thm}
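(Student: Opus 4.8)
The plan is to realize the general complex matrices by Hermitian ones, as announced in the introduction, and then invoke Theorem~\ref{Thm-Hermitian}; the one new ingredient is a commutator identity measuring the error made when one passes from complex commutators to Hermitian commutators. For part (1), write $B_r=B_r^1+B_r^2$ with $B_r^1=\frac12(B_r+B_r^*)$ Hermitian and $B_r^2=\frac12(B_r-B_r^*)$ skew-Hermitian, so that $\mathbf{i}B_r^2$ is Hermitian and $C:=(B_1^1,\dots,B_m^1,\mathbf{i}B_1^2,\dots,\mathbf{i}B_m^2)$ is a $2m$-tuple of Hermitian matrices. Putting $H_r=B_r^1$ and writing $B_r=H_r+\mathbf{i}K_r$ with $K_r$ Hermitian, one decomposes each $[B_r,B_s]$ into its Hermitian and skew-Hermitian parts; since those parts are Frobenius-orthogonal, $\|[B_r,B_s]\|^2$ is a sum of two squared norms. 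Expanding $\sum_{r,s}\|[B_r,B_s]\|^2$ and using cyclicity of the trace, the ``diagonal'' pieces assemble exactly into $\sum_{a,b=1}^{2m}\|[C_a,C_b]\|^2$, the genuinely ``crossed'' four-factor traces cancel, and the surviving cross terms collapse to give the identity
$$\sum_{r,s=1}^m\|[B_r,B_s]\|^2=\sum_{a,b=1}^{2m}\|[C_a,C_b]\|^2-\tfrac12\Big\|\sum_{r=1}^m[B_r,B_r^*]\Big\|^2 .$$
Because $\|\mathbf{i}B_r^2\|=\|B_r^2\|$ and $\|B_r\|^2=\|B_r^1\|^2+\|B_r^2\|^2$ by orthogonality of the splitting, we get $\sum_{a=1}^{2m}\|C_a\|^2=\sum_{r=1}^m\|B_r\|^2$; since $2m\ge 6\ge 3$ and $n\ge 2$, Theorem~\ref{Thm-Hermitian} applied to $C$ yields $\sum_{a,b}\|[C_a,C_b]\|^2\le\frac43\big(\sum_r\|B_r\|^2\big)^2$, so (\ref{cDDVV}) follows since the last term of the identity is non-positive.

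For the equality case, the identity shows that equality in (\ref{cDDVV}) holds if and only if $\sum_r[B_r,B_r^*]=0$ \emph{and} equality holds in Theorem~\ref{Thm-Hermitian} for the tuple $C$. The known equality condition of \cite{GXYZ17} for the Hermitian DDVV inequality then furnishes a $(P,R)\in U(n)\times O(2m)$ carrying $C$ to $(\diag(H_1,0),\diag(H_2,0),\diag(H_3,0),0,\dots,0)$; here one checks that conjugating the $B_r$ by $P\in U(n)$ commutes with the Hermitian/skew-Hermitian splitting, so the $K(n,2m)$-action appearing here is precisely the stated one on $(B_1^1,\dots,B_m^1,\mathbf{i}B_1^2,\dots,\mathbf{i}B_m^2)$. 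Conversely, these two conditions together force equality via the identity.

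For part (2), realification is lossy (it would only give the constant $\frac43$), so instead one combines the complex B\"{o}ttcher--Wenzel inequality $\|[X,Y]\|^2\le 2\|X\|^2\|Y\|^2$ with AM--GM:
$$\sum_{r,s=1}^2\|[B_r,B_s]\|^2=2\|[B_1,B_2]\|^2\le 4\|B_1\|^2\|B_2\|^2\le\big(\|B_1\|^2+\|B_2\|^2\big)^2 ,$$
and equality forces both the complex BW equality $\|[B_1,B_2]\|^2=2\|B_1\|^2\|B_2\|^2$ and $\|B_1\|=\|B_2\|$; inserting the equality description of \cite{CVW10}, which confines $B_1,B_2$ to a common $2\times 2$ block with traceless and Frobenius-orthogonal entries, and adding $\|B_1\|=\|B_2\|$, gives the stated normal form.

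The main obstacle is the commutator identity in part (1): one must push the four-factor trace bookkeeping far enough to confirm that the ``crossed'' contributions cancel and that the surviving error is exactly $\frac12\|\sum_r[B_r,B_r^*]\|^2$ — a single computation that at once produces the sharp constant $\frac43$ and the side condition $\sum_r[B_r,B_r^*]=0$ in the equality statement. Everything else (orthogonality of the splitting, $U(n)$-equivariance of the construction, and transcription of the equality conditions of \cite{GXYZ17} and \cite{CVW10}) is routine.
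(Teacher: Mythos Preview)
Your proposal is correct and follows essentially the same route as the paper: the same Hermitian/skew-Hermitian splitting, the same key identity (the paper's equation (\ref{cDDVV3}), which in view of $[B_r^1,B_r^2]=-\tfrac12[B_r,B_r^*]$ is exactly your displayed identity), the same application of Theorem~\ref{Thm-Hermitian} to the $2m$-tuple, and the same appeal to \cite{BW08,CVW10} for $m=2$. The only cosmetic difference is that the paper derives the cross-term cancellation via the Jacobi identity rather than speaking of trace cyclicity, but this is the same computation.
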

\begin{proof}
The case (2) for $m=2$ is implied by the BW inequality for complex matrices (cf. \cite{BW08}) and its equality condition (cf. \cite{CVW10}). We prove the case (1) as follows.

For $1\leq r \leq m$, let $B_r=B_r^1+B_r^2$,
where $B_r^1$ is a Hermitian matrix, $B_r^2$ is a skew-Hermitian matrix.
Because Hermitian matrices are orthogonal to skew-Hermitian matrices, we have
\begin{equation}\label{cDDVV2}
\|B_r\|^2=\|B_r^1\|^2+\|B_r^2\|^2.
\end{equation}
Using this decomposition of complex matrices, we calculate the commutators directly:
$$\begin{aligned}
\|\[B_r,B_s\]\|^2 
&=\sum^2_{i,j=1}\|\[B_r^i,B_s^j\]\|^2-2\left<\[B_r^1,B_r^2\],\[B_s^1,B_s^2\]\right>,
\end{aligned}$$
where we have also used the known Jacobi identity. 
Note that commutators of two symmetry or two skew-symmetry matrices are skew-symmetry,
and a commutator of a symmetry matrix with a skew-symmetry matrix is symmetry.
It follows that
\begin{equation}\label{cDDVV3}
\sum^m_{r,s=1}\|\[B_r,B_s\]\|^2
=\sum^m_{r,s=1}\sum^2_{i,j=1}\|\[B_r^i,B_s^j\]\|^2
-2\left\|\sum^m_{r=1}\[B_r^1,B_r^2\]\right\|^2.
\end{equation}
Since $B_1^1,\cdots, B_m^1, B_1^2,\cdots, B_m^2$
are Hermitian or skew-Hermitian matrices,
by Theorem \ref{Thm-Hermitian} and Remark \ref{rem-HDDVV} we have the DDVV-type inequality:
\begin{equation}\label{cDDVV4}
\sum^m_{r,s=1}\sum^2_{i,j=1}\|\[B_r^i,B_s^j\]\|^2 \leq
\frac43\(\sum^m_{r=1}\sum^2_{i=1}\|B_r^i\|^2\)^2
\end{equation}
Then by (\ref{cDDVV2}), (\ref{cDDVV3}) and (\ref{cDDVV4}), we have
$$\begin{aligned}
\sum^m_{r,s=1}\|\[B_r,B_s\]\|^2
&= \sum^m_{r,s=1}\sum^2_{i,j=1}\|\[B_r^i,B_s^j\]\|^2-2\left\|\sum^m_{r=1}\[B_r^1,B_r^2\]\right\|^2\\
&\leq \frac43\(\sum^m_{r=1}\sum^2_{i=1}\|B_r^i\|^2\)^2-2\left\|\sum^m_{r=1}\[B_r^1,B_r^2\]\right\|^2\\
&=\frac43\(\sum^m_{r=1}\|B_r\|^2\)^2-\frac12\left\|\sum^m_{r=1}\[B_r,B_r^*\]\right\|^2.
\end{aligned}$$
The last inequality implies (\ref{cDDVV}) and the first equality condition. As for the second equality condition, we just apply the equality condition of the DDVV-type inequality (\ref{cDDVV4}) for Hermitian matrices $B_1^1, \cdots, B_m^1, \mathbf{i}B_1^2, \cdots, \mathbf{i}B_m^2$ given by \cite{GXYZ17}.

The proof is complete.
\end{proof}

\begin{rem}
Let
$$B_1:=
\begin{pmatrix}
1 & 0 \\
0 & -1 \\
\end{pmatrix}, \quad
B_2:=
\begin{pmatrix}
0 & 1 \\
1 & 0 \\
\end{pmatrix}, \quad
B_3:=
\begin{pmatrix}
0 & -1 \\
1 & 0 \\
\end{pmatrix},$$
then$$\sum^3_{r,s=1}\|\[B_r,B_s\]\|^2\ = \frac43\(\sum^3_{r=1}\|B_r\|^2\)^2, \quad \sum^2_{r,s=1}\|\[B_r,B_s\]\|^2\ = \(\sum^2_{r=1}\|B_r\|^2\)^2.$$
Hence the optimal constants for the real matrices case and the complex matrices case are both
$\frac43$ for $m\geq3$, and $1$ for $m=2$.
\end{rem}

When $m=3$, we have even the following simpler proof of the DDVV-type inequality (\ref{cDDVV}) by using the BW inequality.
For $1\leq r, s \leq 3$, by the BW inequality and the inequality of arithmetic and geometric means, we have
$$\|\[B_r,B_s\]\|^2\leq 2\|B_r\|^2\|B_s\|^2\leq \frac43\|B_r\|^2\|B_s\|^2+\frac13\(\|B_r\|^4+\|B_s\|^4\),$$
which directly shows
$$\sum^3_{r,s=1}\|\[B_r,B_s\]\|^2\leq
\frac43\sum^3_{r,s=1, r\neq s}\|B_r\|^2\|B_s\|^2+\frac13\sum^3_{r,s=1, r\neq s}\(\|B_r\|^4+\|B_s\|^4\)
=\frac43\(\sum^3_{r=1}\|B_r\|^2\)^2. $$

When $B_1,\cdots,B_m$ are complex symmetric or complex skew-symmetric matrices,
we can get smaller optimal constants by slightly changing the proof of (\ref{cDDVV}).
\begin{cor}\label{cor-sym}
Let $B_1,\cdots,B_m$ be $n\times n$ complex symmetric matrices $(n\geq2)$, then
\begin{equation}\label{csymDDVV}
\sum^m_{r,s=1}\|\[B_r,B_s\]\|^2\leq \(\sum^m_{r=1}\|B_r\|^2\)^2.
\end{equation}
For $1\leq r \leq m$, let $A_r^1=\frac12(B_r+\overline{B_r}), A_r^2=\frac1{2\mathbf{i}}(B_r-\overline{B_r})$.
The equality holds if and only if $\sum^m_{r=1}\[B_r,\overline{B_r}\]=0$ and
there exists a $(P,R)\in K(n, 2m)$ (where $P\in O(n)$) such that
$$(P,R)\cdot(A_1^1, \cdots, A_m^1, A_1^2, \cdots, A_m^2)=
(\diag(H_1,0), \diag(H_2,0), 0, \cdots, 0),$$ where for some $\lambda\geq0$,
$$H_1:=
\begin{pmatrix}
\lambda & 0 \\
0 & -\lambda \\
\end{pmatrix}, \quad
H_2:=
\begin{pmatrix}
0 & \lambda \\
\lambda & 0 \\
\end{pmatrix}.
$$
\end{cor}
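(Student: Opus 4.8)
The plan is to mimic the proof of Theorem \ref{TcDDVV} case (1), replacing the Hermitian/skew-Hermitian splitting by the real/imaginary splitting adapted to complex symmetric matrices. First I would write $B_r = A_r^1 + \mathbf{i} A_r^2$ where $A_r^1 = \frac12(B_r+\overline{B_r})$ and $A_r^2 = \frac1{2\mathbf{i}}(B_r-\overline{B_r})$; since $B_r$ is complex symmetric, both $A_r^1$ and $A_r^2$ are \emph{real symmetric} matrices. The real and imaginary parts are orthogonal with respect to the Hermitian inner product, so $\|B_r\|^2 = \|A_r^1\|^2 + \|A_r^2\|^2$. Then I would expand $\|[B_r,B_s]\|^2$ exactly as in the proof of Theorem \ref{TcDDVV}: the commutator $[B_r,B_s]$ splits into $([A_r^1,A_s^1]-[A_r^2,A_s^2]) + \mathbf{i}([A_r^1,A_s^2]+[A_r^2,A_s^1])$, and the same Jacobi-identity manipulation yields the cross-term identity, giving
\begin{equation*}
\sum^m_{r,s=1}\|[B_r,B_s]\|^2 = \sum^m_{r,s=1}\sum^2_{i,j=1}\|[A_r^i,A_s^j]\|^2 - 2\left\|\sum^m_{r=1}[A_r^1,A_r^2]\right\|^2.
\end{equation*}
(One small sign check: because here the split is real/imaginary rather than Hermitian/skew-Hermitian, the adjoints appearing in the Jacobi computation are ordinary transposes of real symmetric matrices, i.e. the matrices themselves, so the identity goes through unchanged.)

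Next I would apply the DDVV-type inequality for real symmetric matrices — which has optimal constant $c=1$ (the top-left entry of Table \ref{table-1}, from \cite{GT08}) — to the $2m$ real symmetric matrices $A_1^1,\dots,A_m^1,A_1^2,\dots,A_m^2$:
\begin{equation*}
\sum^m_{r,s=1}\sum^2_{i,j=1}\|[A_r^i,A_s^j]\|^2 \le \left(\sum^m_{r=1}\sum^2_{i=1}\|A_r^i\|^2\right)^2 = \left(\sum^m_{r=1}\|B_r\|^2\right)^2.
\end{equation*}
Combining with the identity above gives
\begin{equation*}
\sum^m_{r,s=1}\|[B_r,B_s]\|^2 \le \left(\sum^m_{r=1}\|B_r\|^2\right)^2 - 2\left\|\sum^m_{r=1}[A_r^1,A_r^2]\right\|^2,
\end{equation*}
which proves \eqref{csymDDVV}. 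The coefficient $2$ in front of the subtracted term should be rewritten in terms of $\sum_r[B_r,\overline{B_r}]$: since $[B_r,\overline{B_r}] = [A_r^1+\mathbf{i}A_r^2, A_r^1-\mathbf{i}A_r^2] = -2\mathbf{i}[A_r^1,A_r^2]$, we get $2\|\sum_r[A_r^1,A_r^2]\|^2 = \frac12\|\sum_r[B_r,\overline{B_r}]\|^2$, matching the stated equality condition.

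For the equality condition: equality in \eqref{csymDDVV} forces both $\sum_r[A_r^1,A_r^2]=0$ (equivalently $\sum_r[B_r,\overline{B_r}]=0$) and equality in the real-symmetric DDVV inequality for the tuple $(A_1^1,\dots,A_m^1,A_1^2,\dots,A_m^2)$. I would then invoke the known equality characterization of the real symmetric DDVV inequality (as referenced for Table \ref{table-1}): there is an orthogonal $P\in O(n)$ and a rotation $R\in O(2m)$ taking the $2m$-tuple to $(\diag(H_1,0),\diag(H_2,0),0,\dots,0)$ with $H_1,H_2$ the stated $2\times2$ blocks — note the real symmetric model triple uses only two nonzero matrices (unlike the Hermitian case which needed three), which is why only $H_1,H_2$ appear here. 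The bookkeeping to translate the $O(n)\times O(2m)$ action on $(A_r^i)$ into the $K(n,2m)$ action statement is routine. The main obstacle, as in the complex-matrix case, is verifying that the Jacobi-identity cross-term cancellation is genuinely sign-correct under the real/imaginary decomposition and that the equality-case normal form is not further constrained by the additional structure (real symmetry of $P$, vanishing of $\sum_r[B_r,\overline{B_r}]$) beyond what the real DDVV equality case already gives; I expect it is not, but that compatibility check is the one genuinely new point.
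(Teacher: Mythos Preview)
Your proposal is correct and follows essentially the same approach as the paper's proof: decompose each complex symmetric $B_r$ into real symmetric $A_r^1,A_r^2$, reuse the commutator identity (\ref{cDDVV3}) from the proof of Theorem~\ref{TcDDVV} (the paper observes that $B_r = A_r^1 + \mathbf{i}A_r^2$ \emph{is} a Hermitian/skew-Hermitian splitting, so (\ref{cDDVV2}) and (\ref{cDDVV3}) apply verbatim with $A_r^i$ in place of $B_r^i$), apply the real symmetric DDVV inequality of \cite{GT08,Lu11} to the $2m$-tuple, and read off the equality conditions. Your extra computation relating $\sum_r[A_r^1,A_r^2]$ to $\sum_r[B_r,\overline{B_r}]$ is a helpful sanity check the paper leaves implicit.
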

The proof is essentially the same as for Theorem \ref{TcDDVV} but based on the decomposition into the sum of a real symmetric matrix and a purely imaginary symmetric matrix.
As the imaginary unit does not change norms, we can utilize the DDVV inequality for real symmetric matrices.

In the same way, we can obtain the next corollary by the DDVV-type inequality
for real skew-symmetric matrices. The proof is omitted here.
\begin{cor}\label{cor-skew}
Let $B_1,\cdots,B_m$ be $n\times n$ complex skew-symmetric matrices, $m\geq 3$.
\begin{enumerate}
\item If $n=3$, then
\begin{equation*}\label{cskewDDVV1}
\sum^m_{r,s=1}\|\[B_r,B_s\]\|^2\leq \frac13\(\sum^m_{r=1}\|B_r\|^2\)^2.
\end{equation*}
For $1\leq r \leq m$, let $A_r^1=\frac12(B_r+\overline{B_r}), A_r^2=\frac1{2\mathbf{i}}(B_r-\overline{B_r})$.
The equality holds if and only if $\sum^m_{r=1}\[B_r,\overline{B_r}\]=0$ and
there exists a $(P,R)\in K(n, 2m)$  (where $P\in O(n)$) such that
$$(P,R)\cdot(A_1^1, \cdots, A_m^1, A_1^2, \cdots, A_m^2)=
(\diag(C_1,0), \diag(C_2,0), \diag(C_3,0), 0, \cdots, 0),$$ where for some $\lambda\geq0$,
$$C_1:=
\begin{pmatrix}
0 & \lambda & 0 \\
-\lambda & 0 & 0 \\
0 & 0 & 0 \\
\end{pmatrix}, \quad
C_2:=
\begin{pmatrix}
0 & 0 & \lambda \\
0 & 0 & 0 \\
-\lambda & 0 & 0 \\
\end{pmatrix},\quad
C_3:=
\begin{pmatrix}
0 & 0 & 0 \\
0 & 0 & \lambda \\
0 & -\lambda & 0 \\
\end{pmatrix}.
$$

\item If $n\geq 4$, then
\begin{equation*}\label{cskewDDVV2}
\sum^m_{r,s=1}\|\[B_r,B_s\]\|^2\leq \frac23\(\sum^m_{r=1}\|B_r\|^2\)^2.
\end{equation*}
For $1\leq r \leq m$, let $A_r^1=\frac12(B_r+\overline{B_r}), A_r^2=\frac1{2\mathbf{i}}(B_r-\overline{B_r})$.
The equality holds if and only if $\sum^m_{r=1}\[B_r,\overline{B_r}\]=0$ and
there exists a $(P,R)\in K(n, 2m)$  (where $P\in O(n)$) such that
$$(P,R)\cdot(A_1^1, \cdots, A_m^1, A_1^2, \cdots, A_m^2)=
(\diag(D_1,0), \diag(D_2,0), \diag(D_3,0), 0, \cdots, 0),$$ where for some $\lambda\geq0$,
$$D_1:=
\begin{pmatrix}
0 & \lambda & 0 & 0 \\
-\lambda & 0 & 0 & 0 \\
0 & 0 & 0 & \lambda \\
0 & 0 & -\lambda & 0 \\
\end{pmatrix}, \quad
D_2:=
\begin{pmatrix}
0 & 0 & \lambda &0 \\
0 & 0 & 0 & -\lambda \\
-\lambda & 0 & 0 & 0 \\
0 & \lambda & 0 & 0 \\
\end{pmatrix},\quad
D_3:=
\begin{pmatrix}
0 & 0 & 0 & \lambda \\
0 & 0 & \lambda & 0 \\
0 & -\lambda & 0 & 0 \\
-\lambda & 0 & 0 & 0 \\
\end{pmatrix}.
$$
\end{enumerate}
\end{cor}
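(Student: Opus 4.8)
The plan is to rerun the real-reduction argument behind Corollary~\ref{cor-sym}, but feeding in Ge's DDVV-type inequality for real \emph{skew}-symmetric matrices in place of the classical DDVV inequality. For each $r$ set $A_r^1=\frac12(B_r+\overline{B_r})$ and $A_r^2=\frac1{2\mathbf{i}}(B_r-\overline{B_r})$, so that $B_r=A_r^1+A_r^2\mathbf{i}$; since $B_r$ is complex skew-symmetric and the $A_r^i$ are real, both $A_r^1$ and $A_r^2$ lie in $o(n)$, with $A_r^1$ skew-Hermitian and $\mathbf{i}A_r^2$ Hermitian. Orthogonality of real and imaginary parts gives $\|B_r\|^2=\|A_r^1\|^2+\|A_r^2\|^2$, and the Jacobi-identity computation already carried out in the proof of Theorem~\ref{TcDDVV} — which uses only that one summand is of Hermitian type and the other of skew-Hermitian type — applies verbatim to yield, exactly as in (\ref{csymDDVV3}),
$$\sum^m_{r,s=1}\|\[B_r,B_s\]\|^2=\sum^m_{r,s=1}\sum^2_{i,j=1}\|\[A_r^i,A_s^j\]\|^2-2\left\|\sum^m_{r=1}\[A_r^1,A_r^2\]\right\|^2.$$

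Next I would apply the DDVV-type inequality for real skew-symmetric $n\times n$ matrices (\cite{Ge14}) to the family of $2m$ matrices $A_1^1,\dots,A_m^1,A_1^2,\dots,A_m^2\in o(n)$; since $m\geq3$ we have $2m\geq6\geq3$, so that inequality applies and gives
$$\sum^m_{r,s=1}\sum^2_{i,j=1}\|\[A_r^i,A_s^j\]\|^2\leq c_n\(\sum^m_{r=1}\sum^2_{i=1}\|A_r^i\|^2\)^2=c_n\(\sum^m_{r=1}\|B_r\|^2\)^2,$$
where $c_n=\frac13$ for $n=3$ and $c_n=\frac23$ for $n\geq4$ (see Table~\ref{table-1}). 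Combining this with the displayed identity and dropping the nonpositive term $-2\|\sum_r\[A_r^1,A_r^2\]\|^2$ produces the two stated inequalities.

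For the equality discussion, equality forces simultaneously $\sum_{r=1}^m\[A_r^1,A_r^2\]=0$ and equality in the real skew-symmetric DDVV inequality for the $2m$-tuple $(A_r^i)$. A direct computation gives $\[B_r,\overline{B_r}\]=-2\mathbf{i}\[A_r^1,A_r^2\]$, so the first condition is precisely $\sum_{r=1}^m\[B_r,\overline{B_r}\]=0$. For the second I would quote the equality classification of \cite{Ge14}: up to the $O(n)\times O(2m)$ action the extremal tuple is supported in a single $3\times3$ (resp.\ $4\times4$) corner and, after a further rotation in $O(2m)$, equals $(\diag(C_1,0),\diag(C_2,0),\diag(C_3,0),0,\dots,0)$ (resp.\ the analogous statement with $D_1,D_2,D_3$); one then reads this $O(n)\times O(2m)$ normal form as the asserted $K(n,2m)$ statement with $P\in O(n)$ acting on the tuple $(A_1^1,\dots,A_m^1,A_1^2,\dots,A_m^2)$. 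The step I expect to be the main obstacle is exactly this last identification: one must verify that the extremal triple of real skew-symmetric matrices coming out of \cite{Ge14} is orthogonally equivalent to $(C_1,C_2,C_3)$ — a scalar multiple of the standard basis of $o(3)$ — when $n=3$, and to $(D_1,D_2,D_3)$ — the triple $\lambda(E_{12}+E_{34}),\ \lambda(E_{13}-E_{24}),\ \lambda(E_{14}+E_{23})$ spanning a maximal $su(2)$ inside $o(4)$ — when $n\geq4$; the remaining bookkeeping (matching the $O(2m)$ rotation with the stated $K(n,2m)$ action, and checking that the two equality conditions together indeed pin down the block configuration) is routine.
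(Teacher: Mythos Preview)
Your proposal is correct and follows exactly the approach the paper intends: the paper itself omits the proof, stating only ``In the same way, we can obtain the next corollary by the DDVV-type inequality for real skew-symmetric matrices,'' which is precisely the decomposition $B_r=A_r^1+A_r^2\mathbf{i}$ into real skew-symmetric parts followed by the identity (\ref{csymDDVV3}) and an application of \cite{Ge14}. Your worry about the equality classification is unfounded bookkeeping: the matrices $C_1,C_2,C_3$ and $D_1,D_2,D_3$ are exactly the canonical extremal triples for $o(3)$ and $o(n)$ ($n\geq4$) given in \cite{Ge14}, so the identification is immediate.
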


\section{DDVV-type inequality for quaternionic matrices}\label{sec3}
In this section, we generalize the BW inequality and the DDVV inequality to quaternionic matrices by the same idea as in the last section.
In this case, it turns out that both of
the optimal constants $c$ are double of that for complex matrices,
mainly because the multiplication of $\mathbf{i}, \mathbf{j}, \mathbf{k}$ is anti-commutative.
The proof is carried out simply by mapping a quaternionic matrix into a complex matrix and then using the known inequalities for complex matrices
in a fashion analogous to what we have done with proving Theorem \ref{Thm-Hermitian}.

Let
$$\begin{aligned}
\Psi: M(n,\mathbb{H})&\longrightarrow M(2n,\mathbb{C}),\\
X=X_1+X_2\mathbf{j}&\longmapsto
\begin{pmatrix}
X_1 & X_2 \\
-\overline{X_2} & \overline{X_1} \\
\end{pmatrix},
\end{aligned}$$
where  $X_1, X_2 \in M(n,\mathbb{C})$.
It is easy to see
\begin{equation}\label{qDDVV2}
\|\Psi(X)\|^2=2\|X\|^2.
\end{equation}
For $X, Y \in M(n,\mathbb{H})$, let
$$A_1:=[X_1, Y_1]-X_2\overline{Y_2}+Y_2\overline{X_2}, \quad
A_2:=X_1Y_2-Y_2\overline{X_1}+X_2\overline{Y_1}-Y_1X_2.$$
Then direct calculations show
 $$[X,Y]=A_1+A_2\mathbf{j},\quad
\[\Psi(X),\Psi(Y)\]=
\begin{pmatrix}
A_1 & A_2 \\
-\overline{A_2} & \overline{A_1} \\
\end{pmatrix}. $$
Remember $\mathbf{j}\mathbf{i}=-\mathbf{i}\mathbf{j}$ when moving the $\mathbf{j}$ to the right.
Therefore $\Psi$ preserves the commutator:
\begin{equation}\label{qDDVV3}
\[\Psi(X),\Psi(Y)\]=\Psi(\[X,Y\]).
\end{equation}

\begin{thm}\label{TqBW}
Let $X,Y$ be arbitrary $n\times n$ quaternionic matrices, then
\begin{equation}\label{qBW}
\|\[X,Y\]\|^2\leq 4\|X\|^2\|Y\|^2.
\end{equation}
The equality holds if and only if either $\|X\|\|Y\|=0$ or there exists a unitary matrix $U$
such that $\Psi(X)=U^*\diag(X_0, 0)U, \Psi(Y)=U^*\diag(Y_0, 0)U$,
where $X_0, Y_0\in M(2, \mathbb{C})$ with $\<X_0,Y_0\>=0, \tr(X_0)=\tr(Y_0)=0$, and $\rank X_0=\rank Y_0=2$.
\end{thm}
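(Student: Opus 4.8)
The plan is to imitate the strategy already used for the DDVV-type inequalities: transport the problem to the complex setting via $\Psi$, apply the complex BW inequality together with its equality characterization from \cite{CVW10}, and translate the conclusion back. First I would use the identities \eqref{qDDVV2} and \eqref{qDDVV3}: for $X,Y\in M(n,\mathbb{H})$ we have
$$\|[X,Y]\|^2=\tfrac12\|\Psi([X,Y])\|^2=\tfrac12\|[\Psi(X),\Psi(Y)]\|^2\leq\tfrac12\cdot 2\|\Psi(X)\|^2\|\Psi(Y)\|^2=\tfrac12\cdot 2\cdot 2\|X\|^2\cdot 2\|Y\|^2,$$
which is exactly $4\|X\|^2\|Y\|^2$. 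This immediately gives \eqref{qBW}. (One must check that the complex BW inequality applies to the $2n\times 2n$ complex matrices $\Psi(X),\Psi(Y)$, which it does since it holds for arbitrary complex matrices.)

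For the equality discussion I would argue as follows. Equality in \eqref{qBW} forces equality in the complex BW inequality applied to $\Psi(X),\Psi(Y)$. If $\|X\|\|Y\|=0$ equality is trivial, so assume $\|X\|\|Y\|\neq 0$, hence $\|\Psi(X)\|\|\Psi(Y)\|\neq 0$. By the equality condition of \cite{CVW10}, equality in the complex BW inequality $\|[A,B]\|^2\le 2\|A\|^2\|B\|^2$ with $\|A\|\|B\|\neq0$ holds iff there is a unitary $U$ with $A=U^*\diag(A_0,0)U$, $B=U^*\diag(B_0,0)U$ where $A_0,B_0\in M(2,\mathbb{C})$ satisfy $\langle A_0,B_0\rangle=0$, $\tr(A_0)=\tr(B_0)=0$, and $\mathrm{rank}\,A_0=\mathrm{rank}\,B_0=2$ (the rank-$2$ condition is the nondegenerate branch of the \cite{CVW10} classification; the rank-$\le1$ branches force $[A,B]=0$ and cannot give equality when $\|A\|\|B\|\neq0$). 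Applying this to $A=\Psi(X)$, $B=\Psi(Y)$ yields the stated form with $X_0=A_0$, $Y_0=B_0$. Conversely, if $\Psi(X),\Psi(Y)$ have this form, running the chain of equalities above backwards shows \eqref{qBW} is an equality.

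The main subtlety — and the step I would be most careful about — is the backward direction together with the consistency of the reduction: one needs that $\Psi$ is onto a real subalgebra and that the block form $\Psi(X)=U^*\diag(X_0,0)U$ is actually \emph{compatible} with $X$ being quaternionic, i.e.\ that such a normal form exists with $U$ chosen so that everything descends. Here I would observe that the statement is phrased entirely in terms of $\Psi(X),\Psi(Y)$ and an arbitrary complex unitary $U$, so no compatibility with the quaternionic structure of $U$ is being claimed; the equality condition is simply read off at the level of $2n\times2n$ complex matrices after noting $\Psi$ is a norm-scaling, commutator-preserving injection. Thus the only genuine content beyond \cite{CVW10} is the identity chain above and the remark that when $\|X\|\|Y\|\ne0$ only the rank-$2$ branch of the complex equality case can occur, which follows since any lower-rank configuration in \cite{CVW10} makes the commutator vanish. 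This completes the proof.
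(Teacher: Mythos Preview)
Your derivation of the inequality \eqref{qBW} via $\Psi$ and the complex BW inequality is correct and is exactly what the paper does. The gap is in your equality analysis: the claim that ``the rank-$\le1$ branches force $[A,B]=0$'' is false. Take $A_0=\begin{pmatrix}0&1\\0&0\end{pmatrix}$ (rank one, traceless) and $B_0=\begin{pmatrix}a&0\\c&-a\end{pmatrix}$; then $\langle A_0,B_0\rangle=0$, $\tr B_0=0$, and a direct computation gives $\|[A_0,B_0]\|^2=4|a|^2+2|c|^2=2\|A_0\|^2\|B_0\|^2$, so BW equality holds with a nonzero commutator even though $\rank A_0=1$. Thus nothing in the \cite{CVW10} characterization alone forces $\rank X_0=\rank Y_0=2$.

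What actually produces the rank-$2$ conclusion is the quaternionic structure, which you explicitly set aside. The paper observes that for any $X\in M(n,\mathbb{H})$ one has $\rank\Psi(X)=2\rank X$ (see \cite{Z97}); since $\|X\|\neq0$ implies $\rank X\ge1$, the even number $\rank\Psi(X)$ is at least $2$, while the \cite{CVW10} normal form gives $\rank\Psi(X)\le2$. Hence $\rank X_0=\rank\Psi(X)=2$, and similarly for $Y_0$. Your argument should replace the incorrect ``rank-$\le1\Rightarrow$ commuting'' step with this parity observation; the rest of your proof then goes through.
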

\begin{proof}
Since $\Psi(X), \Psi(Y)$ are complex matrices , we have the BW inequality:
\begin{equation}\label{qDDVV4}
\|\[\Psi(X),\Psi(Y)\]\|^2\leq 2\|\Psi(X)\|^2\|\Psi(Y)\|^2.
\end{equation}
Combining (\ref{qDDVV2}), (\ref{qDDVV3}) and (\ref{qDDVV4}), we obtain immediately
$$
\|\[X,Y\]\|^2=\frac12\|\Psi(\[X,Y\])\|^2
=\frac12\|\[\Psi(X),\Psi(Y)\]\|^2
\leq \|\Psi(X)\|^2\|\Psi(Y)\|^2
=4\|X\|^2\|Y\|^2.
$$

By the equality condition of
the BW inequality for complex matrices (cf. \cite{CVW10}), the equality holds if and only if either $\|X\|\|Y\|=0$ or there exists a unitary matrix $U$
such that $\Psi(X)=U^*\diag(X_0, 0)U$, $\Psi(Y)=U^*\diag(Y_0, 0)U$,
where $X_0, Y_0\in M(2, \mathbb{C})$ with $\<X_0,Y_0\>=0$, $\tr(X_0)=\tr(Y_0)=0$. In the second case this implies that $\rank\Psi(X)\leq2$ and $\rank\Psi(Y)\leq2$, whereas $\rank\Psi(X)=2\rank X$ and $\rank\Psi(Y)=2\rank Y$ (see \cite{Z97} for the definition of the rank of quaternionic matrices and for a survey of various related results). Hence $\rank X_0=\rank\Psi(X)=2\rank X=2$ and $\rank Y_0=\rank\Psi(Y)=2\rank Y=2$. The proof is complete.
\end{proof}

\begin{rem}
Let $X=\mathbf{i},~ Y=\mathbf{j}$, then
$$\|\[X,Y\]\|^2= 4\|X\|^2\|Y\|^2. $$
Hence $4$ is the optimal constant for the BW-type inequality for quaternionic matrices.
Moreover, for any $x\in\mathbb{R}^n$ and $\lambda\in\mathbb{R}$, let $X=xx^t\mathbf{i},~Y=\lambda xx^t\mathbf{j}\in M(n,\mathbb{H})$, we have
$$\|\[X,Y\]\|^2= 4\|X\|^2\|Y\|^2. $$
\end{rem}
The matrices in the maximal pair $(X,Y)$ in the remark above indeed have rank one, which is necessary as shown in the proof of Theorem \ref{TqBW}. However, the rank condition along one is not sufficient.
For example, let $X=\begin{pmatrix}
0 &  \mathbf{i}\\
0 & 0 \\
\end{pmatrix}$, or $\begin{pmatrix}
\mathbf{i} &  \mathbf{i}\\
0 & 0 \\
\end{pmatrix}$, then for any $Y\in M(2,\mathbb{H})$, $$\|\[X,Y\]\|^2 < 4\|X\|^2\|Y\|^2.$$

 Recall that in the real or complex case, $4$ is trivially an upper bound (though not optimal) by using the triangle inequality and sub-multiplicativity.
 In fact, this is also applicable in the quaternionic case as follows:
 \begin{equation}\label{qBWpf}
 \begin{aligned}
\|\[X,Y\]\|^2&=\|XY-YX\|^2=\|XY\|^2+\|YX\|^2-2\langle XY, YX\rangle \\
&\leq \(\|XY\|+\|YX\|\)^2\leq 2\(\|XY\|^2+\|YX\|^2\)\leq 4\|X\|^2\|Y\|^2.
\end{aligned}
\end{equation}
Here the last inequality follows from the sub-multiplicativity for quaternionic matrices:
\begin{lem}\label{submult}
Let $X,Y\in M(n,\mathbb{H})$, then $\|XY\|\leq \|X\| \|Y\|$. The equality holds if and only if
either $\|X\| \|Y\|=0$, or $X=au^*$ and $Y=ub^*$ for some column vectors $a,b\in\mathbb{H}^n$ and a unit column vector $u\in\mathbb{H}^n$.
\end{lem}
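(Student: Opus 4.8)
The plan is to reduce the quaternionic sub-multiplicativity $\|XY\|\le\|X\|\|Y\|$ to the known sub-multiplicativity of the Frobenius norm for complex matrices, via the embedding $\Psi\colon M(n,\mathbb H)\to M(2n,\mathbb C)$ introduced just above. First I would check that $\Psi$ is a ring homomorphism, i.e.\ $\Psi(XY)=\Psi(X)\Psi(Y)$ (this is the standard fact that the complex $2n\times2n$ realization of a quaternionic matrix respects products; it follows from the same block computation that gave $(\ref{qDDVV3})$, with the commutator replaced by the product). Combined with the isometry relation $\|\Psi(Z)\|^2=2\|Z\|^2$ from $(\ref{qDDVV2})$, this immediately gives
\[
2\|XY\|^2=\|\Psi(XY)\|^2=\|\Psi(X)\Psi(Y)\|^2\le\|\Psi(X)\|^2\|\Psi(Y)\|^2=4\|X\|^2\|Y\|^2,
\]
so $\|XY\|\le\|X\|\|Y\|$, where the middle inequality is the classical complex Frobenius sub-multiplicativity.

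For the equality case I would again transport everything through $\Psi$. Equality in the chain above forces equality in $\|\Psi(X)\Psi(Y)\|=\|\Psi(X)\|\|\Psi(Y)\|$ for complex matrices. The classical equality analysis of $\|AB\|_F=\|A\|_F\|B\|_F$ (e.g.\ by writing $\|AB\|_F^2=\sum_j\|A b_j\|^2$ where $b_j$ are the columns of $B$, and comparing with $\|A\|_F^2\sum_j\|b_j\|^2$ via Cauchy--Schwarz applied to $\|Ab_j\|\le\|A\|_{\mathrm{op}}\|b_j\|$ together with $\|A\|_{\mathrm{op}}\le\|A\|_F$) shows that, unless $A$ or $B$ is zero, one needs $\|A\|_{\mathrm{op}}=\|A\|_F$ — so $A$ has rank one, $A=\alpha v^*$ — and every nonzero column of $B$ proportional to the top right-singular vector $v$ of $A$, which forces $B=v\beta^*$. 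Thus equality holds for complex matrices iff $A$ or $B$ vanishes, or $A=av^*$, $B=vb^*$ with $v$ a unit vector. The remaining task is to pull these rank-one normal forms back along $\Psi$: if $\Psi(X)=\Psi(XY)$-factorization data has the shape $av^*$, one must show $X$ itself has the claimed form $au^*$ with $u\in\mathbb H^n$ a unit vector, using that $\Psi$ doubles ranks (as recorded in the proof of Theorem~\ref{TqBW}) so $\Psi(X)$ has rank one precisely when $X$ has rank one, and that the column/row spaces of $\Psi(X)$ descend to the corresponding quaternionic data.

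The main obstacle I expect is exactly this last descent step: $\Psi$ is $\mathbb R$-linear but only $\mathbb C$-\emph{semilinear}-ish on the block structure, so a complex rank-one factorization $\Psi(X)=av^*$ does not \emph{a priori} come from a quaternionic rank-one factorization — one has to use that $\Psi(X)$ lies in the image of $\Psi$ (a quaternionic $J$-compatibility condition on the $2n\times2n$ complex matrix) to conclude that the complex singular vectors can be chosen $J$-equivariantly, which then repackages into a single quaternionic vector $u$. This is a finite-dimensional linear-algebra bookkeeping argument rather than anything deep, and it is the same mechanism already invoked for the rank identity in Theorem~\ref{TqBW}; citing \cite{Z97} for the structure of quaternionic matrices should let one keep it short. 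Once the normal form $X=au^*$, $Y=ub^*$ is established on the complex side and pulled back, one verifies directly that it indeed saturates $\|XY\|=\|X\|\|Y\|$, completing the equivalence.
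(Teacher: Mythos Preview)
Your reduction to the complex case via $\Psi$ does not give the inequality you want. Look again at the chain
\[
2\|XY\|^2=\|\Psi(XY)\|^2=\|\Psi(X)\Psi(Y)\|^2\le\|\Psi(X)\|^2\|\Psi(Y)\|^2=4\|X\|^2\|Y\|^2:
\]
dividing by $2$ yields only $\|XY\|^2\le 2\|X\|^2\|Y\|^2$, i.e.\ $\|XY\|\le\sqrt{2}\,\|X\|\|Y\|$, not $\|XY\|\le\|X\|\|Y\|$. The mismatch is structural: the doubling factor from $\|\Psi(\cdot)\|^2=2\|\cdot\|^2$ appears once on the left but twice on the right, so the complex sub-multiplicativity constant is doubled under $\Psi$. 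This is precisely the same mechanism that doubles the BW and DDVV constants when passing from $\mathbb C$ to $\mathbb H$ in Theorems~\ref{TqBW} and~\ref{TqDDVV}, and it cannot be avoided by this route. A concrete check: with $n=1$, $X=\mathbf i$, $Y=\mathbf j$, one has $\|XY\|=\|X\|\|Y\|=1$, yet $\Psi(X)=\diag(\mathbf i,-\mathbf i)$ has rank $2$ and $\|\Psi(X)\Psi(Y)\|^2=2<4=\|\Psi(X)\|^2\|\Psi(Y)\|^2$, so the complex inequality is strict even though the quaternionic one is an equality.

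This also shows why the equality analysis via $\Psi$ cannot work: the complex equality case forces $\Psi(X)$ to have rank one, but $\rank\Psi(X)=2\rank X$ is always even, so the complex equality never triggers for nonzero $X$. The paper instead argues entirely inside $M(n,\mathbb H)$: it diagonalizes the quaternionic Hermitian positive semi-definite matrix $X^*X$ by some $U\in Sp(n)$, writes $\|XY\|^2=\langle X^*X,YY^*\rangle=\sum_i\lambda_i\big(\sum_k\|(U^*Y)_{ik}\|^2\big)$, and bounds this by $(\sum_i\lambda_i)\|Y\|^2=\|X\|^2\|Y\|^2$. Equality then forces $\lambda_1>0=\lambda_2=\cdots=\lambda_n$ and $U^*Y$ supported in the first row, which unwinds to $X=au^*$, $Y=ub^*$ with $u$ the first column of $U$. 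You need a genuinely quaternionic argument of this sort; the $\Psi$-transfer loses exactly a factor of $\sqrt 2$.
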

\begin{proof}
Since $X^*X$ is quaternionic Hermitian and positive semi-definite, there exists a quaternionic unitary matrix $U\in Sp(n)$ such that $U^*X^*XU=\diag (\lambda_1,\cdots, \lambda_n)$ with nonnegative real numbers $\lambda_1\geq\cdots\geq \lambda_n\geq0$ (cf. \cite{Z97}). Then
$$\begin{aligned}
\|XY\|^2&=\langle XY, XY\rangle=\langle X^*X, YY^*\rangle\\
&=\langle U^*X^*XU, U^*YY^*U\rangle=\sum_{i=1}^n\lambda_i \(\sum_{k=1}^n\|\sum_{j=1}^n\overline{u_{ji}}y_{jk}\|^2\)\\
&\leq \(\sum_{i=1}^n\lambda_i\) \(\sum_{k,l=1}^n\|\sum_{j=1}^n\overline{u_{jl}}y_{jk}\|^2\)=\|X\|^2\|Y\|^2,
\end{aligned}$$
where $u_{ij}, y_{ij}$ are entries of $U$ and $Y$. Here we have used the fact:
$$\langle XY, XY\rangle=\Re \tr(XYY^*X^*)=\Re \tr(X^*XYY^*)=\langle X^*X, YY^*\rangle,$$
although in general $\tr(AB)\neq \tr(BA)$ for quaternionic matrices. It follows from the last inequality that the equality holds if and only if
either $\|X\| \|Y\|=0$, or $\lambda_1>0=\lambda_2=\cdots=\lambda_n$ and $U^*Y=(b,0,\cdots,0)^*$ for some column vector $b\in\mathbb{H}^n$. Hence the equality holds if and only if
either $\|X\| \|Y\|=0$, or $X=(a,0,\cdots,0)U^*$ and $Y=U(b,0,\cdots,0)^*$ for some quaternionic unitary matrix $U\in Sp(n)$ and some column vectors $a,b\in\mathbb{H}^n$.
Let $u\in\mathbb{H}^n$ be the first column of $U$. The proof is complete.
\end{proof}
Meanwhile, we have the Cauchy-Schwarz inequality for quaternions:
\begin{lem}\label{qCS}
Let $a,b\in\mathbb{H}^n$ be quaternionic vectors. Then $\|\langle a, b\rangle_\mathbb{H}\|\leq \|a\|\|b\|$, where $\langle a, b\rangle_\mathbb{H}:=b^*a$ is the quaternionic Hermitian product. The equality holds if and only if either $\|a\|\|b\|=0$ or $b=a\sigma$ for some quaternion $\sigma\in\mathbb{H}$.
\end{lem}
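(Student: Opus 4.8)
The plan is to mimic the classical proof of the Cauchy--Schwarz inequality, reducing the claim to a one-parameter minimization over a single quaternion while watching only the orders of multiplication forced by the noncommutativity of $\mathbb{H}$. If $\|a\|=0$ then $a=0$, so $\langle a,b\rangle_\mathbb{H}=b^*a=0$ and the inequality holds as the equality $0=0$, which is the case $\|a\|\|b\|=0$. So I assume $\|a\|\neq0$ and set $q:=\langle a,b\rangle_\mathbb{H}=b^*a\in\mathbb{H}$; conjugating entrywise gives $a^*b=\overline{q}$.

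For $\sigma\in\mathbb{H}$ write $a\sigma:=(a_1\sigma,\dots,a_n\sigma)$, so that $(a\sigma-b)^*=\overline{\sigma}a^*-b^*$. Expanding the nonnegative quantity $\|a\sigma-b\|^2=(a\sigma-b)^*(a\sigma-b)$ and using $\overline{\sigma}\,\overline{q}=\overline{q\sigma}$ together with $x+\overline{x}=2\Re x$, I obtain
$$0\leq\|a\sigma-b\|^2=|\sigma|^2\|a\|^2-2\Re(q\sigma)+\|b\|^2.$$
Now I specialize to $\sigma:=\overline{q}/\|a\|^2$: then $q\sigma=|q|^2/\|a\|^2$ is real and $|\sigma|^2=|q|^2/\|a\|^4$, so the displayed inequality collapses to $0\leq\|b\|^2-|q|^2/\|a\|^2$, i.e.\ $\|\langle a,b\rangle_\mathbb{H}\|^2=|q|^2\leq\|a\|^2\|b\|^2$, which is the asserted inequality.

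For the equality discussion, with $\sigma=\overline{q}/\|a\|^2$ equality above forces $\|a\sigma-b\|=0$, i.e.\ $b=a\sigma$; hence equality with $\|a\|\neq0$ implies that $b$ is a right scalar multiple of $a$. Conversely, if $b=a\sigma$ for some $\sigma\in\mathbb{H}$ then $q=b^*a=\overline{\sigma}\|a\|^2$ and $\|b\|^2=\overline{\sigma}\|a\|^2\sigma=|\sigma|^2\|a\|^2$, so $\|a\|\|b\|=|\sigma|\|a\|^2=|q|$; together with the trivial case $\|a\|\|b\|=0$ this yields the stated equality condition. The only point requiring care is the noncommutativity when expanding $(a\sigma-b)^*(a\sigma-b)$ and when choosing $\sigma$, and it is precisely the choice of $\sigma$ proportional to $\overline{q}$ that defuses it, since then the cross term $q\sigma=|q|^2/\|a\|^2$ is automatically real and no commutation of nonreal quaternions is ever needed. (Alternatively one could extract the inequality itself --- though not as cleanly its equality case --- from the submultiplicativity Lemma \ref{submult} applied to $b^*$ and $a$ padded with zeros to square matrices.)
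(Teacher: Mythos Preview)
Your proof is correct and follows essentially the same classical approach as the paper: both arguments expand the nonnegative norm $\|a\sigma-b\|^2$ (the paper writes this as $\|tb-a\sigma\|^2$ after normalizing and fixing $\sigma=a^*b$, varying the real scalar $t$; you instead vary the quaternion $\sigma$ and plug in the optimal choice $\sigma=\overline{q}/\|a\|^2$ directly). The conclusions for both the inequality and the equality case are reached in the same way, and your added remark linking the bare inequality to Lemma~\ref{submult} is a nice touch.
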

\begin{proof}
The proof is similar to that for real or complex cases (cf. \cite{M70}, where the equality condition of Theorem 6 in page 43 is incorrect, namely, the real $\lambda$ there should be a quaternion on the right). For the sake of completeness, we give a proof following \cite{CMW13}.

Without loss of generality, we assume $\|a\|=\|b\|=1$. Let $\sigma:=a^*b=\langle b, a\rangle_\mathbb{H}$. Then for any $t\in\mathbb{R}$, we have
$$0\leq \|tb-a\sigma\|^2=t^2-2t\|\sigma\|^2+\|\sigma\|^2,$$
which implies $$\|\langle a, b\rangle_\mathbb{H}\|^2=\|\sigma\|^2\leq1= \|a\|^2\|b\|^2.$$
The equality holds if and only if $\|b-a\sigma\|^2=0$, i.e., $b=a\sigma$.
\end{proof}

Based on the proof (\ref{qBWpf}) and Lemmas \ref{submult} and \ref{qCS}, we are able to characterize the maximal pairs of the BW-type inequality (\ref{qBW}) in quaternion domain (pointed out to us by D. Wenzel), thus strengthening Theorem \ref{TqBW}.
\begin{thm}\label{TqBW2}
Let $X,Y\in M(n,\mathbb{H})$, then $\|[X,Y]\|^2\leq 4\|X\|^2 \|Y\|^2$. The equality holds if and only if
either $\|X\| \|Y\|=0$, or $X=upu^*$ and $Y=uqu^*$ for some unit column vector $u\in\mathbb{H}^n$, where $p,q\in\mathbb{H}$ are 
purely imaginary quaternions that have real-orthogonal vector representations in the canonical basis.
\end{thm}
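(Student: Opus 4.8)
The plan is to run the chain of inequalities in (\ref{qBWpf}) backwards, tracking exactly which equality cases must occur simultaneously, and then to feed those cases into the equality conditions of Lemmas \ref{submult} and \ref{qCS}. First I would dispose of the trivial direction: if $\|X\|\|Y\|=0$ one factor vanishes, the commutator is zero, and equality is automatic; and if $X=upu^*$, $Y=uqu^*$ with $u$ a unit vector and $p,q$ orthogonal imaginary quaternions, then a direct computation gives $[X,Y]=u(pq-qp)u^*$, so $\|[X,Y]\|^2=\|pq-qp\|^2$ while $4\|X\|^2\|Y\|^2=4\|p\|^2\|q\|^2$; since $p,q$ are imaginary and orthogonal, $pq$ is imaginary with $pq=-\overline{pq}=-qp$ (using $\overline{pq}=\bar q\bar p=qp$), hence $pq-qp=2pq$ and $\|2pq\|^2=4\|p\|^2\|q\|^2$, confirming equality.

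For the forward direction, assume $\|X\|\|Y\|>0$ and $\|[X,Y]\|^2=4\|X\|^2\|Y\|^2$. Then every inequality in (\ref{qBWpf}) is an equality. The last step $\|XY\|^2+\|YX\|^2\le 2\|X\|^2\|Y\|^2$ forces $\|XY\|=\|YX\|=\|X\|\|Y\|$, so \emph{both} $(X,Y)$ and $(Y,X)$ are equality pairs for Lemma \ref{submult}. Thus $X=au^*$, $Y=ub^*$, and also $X=cv^*$, $Y=vd^*$ for unit vectors $u,v$ and vectors $a,b,c,d$. Comparing the two factorizations of $X$ and of $Y$ (using that $au^*=cv^*$ with $u,v$ unit forces $u,v$ parallel up to a unit quaternion, which can be absorbed), I can arrange a single unit vector $u$ with $X=au^*$, $Y=ub^*$, and simultaneously $X=u\,\tilde a^*$, $Y=\tilde b\,u^*$ — i.e. $X$ has the form $u(\cdot)^*$ \emph{and} $(\cdot)u^*$, forcing $X=\alpha uu^*$ for a scalar, and likewise $Y=\beta uu^*$? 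That is too strong, so I must be more careful: the correct deduction is that $a$ and $u$ span the column/row structure so that $X$ and $Y$ are both supported on the line through $u$ in a compatible way, giving $X=u p u^*$ and $Y=u q u^*$ for quaternions $p,q$ after a common unitary change of basis sending $u$ to $e_1$.

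The remaining step is to pin down $p,q$. The step $\|XY\|^2+\|YX\|^2-2\langle XY,YX\rangle\le(\|XY\|+\|YX\|)^2$ being an equality forces $\langle XY,YX\rangle=-\|XY\|\|YX\|$, i.e. the Cauchy–Schwarz case of Lemma \ref{qCS} with the extremal sign; with $XY=u(pq)u^*$ and $YX=u(qp)u^*$ this says $\langle pq,qp\rangle_{\mathbb H}$ is real and equals $-\|pq\|\|qp\|$, which together with $\|p\|^2\|q\|^2=\|XY\|\cdot\|X\|\|Y\|/\dots$ (the equality $\|XY\|=\|X\|\|Y\|$ now reads $\|pq\|=\|p\|\|q\|$, automatic for quaternions) forces $qp=-pq$ and hence, after normalizing, that $p,q$ are imaginary and orthogonal: anticommuting quaternions must both be imaginary, and $pq=-qp$ is exactly $\Re(\bar p q)=0$, orthogonality. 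I expect the main obstacle to be the bookkeeping in the previous paragraph — showing that the two independent rank-one factorizations coming from the equality cases of $X=au^*$ (from $\|XY\|$) and of $Y$ forced by $\|YX\|$ can be aligned to a \emph{single} unit vector $u$ conjugating both $X$ and $Y$; this is where the noncommutativity of $\mathbb H$ makes the "absorb the unit quaternion" moves delicate, and I would handle it by first reducing via $Sp(n)$ to $u=e_1$ and then arguing entrywise.
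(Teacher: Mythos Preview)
Your overall strategy---tracking equality through the chain (\ref{qBWpf})---and your verification of sufficiency are correct. The gap is precisely where you flag it, but it is a missing idea rather than bookkeeping: applying Lemma~\ref{submult} twice (once to $\|XY\|$, once to $\|YX\|$) and comparing the two rank-one factorizations yields only $b=a\sigma$ for some unit $\sigma$ (equivalently $X=au^*$, $Y=u\bar\sigma a^*$ after normalization); it does \emph{not} force $a$ parallel to $u$. For instance, with $n=2$, $u=e_1$, $a=b=e_2$ one has $\|XY\|=\|YX\|=\|X\|\|Y\|$ while $a\perp u$. Your next paragraph then computes with $XY=u(pq)u^*$, $YX=u(qp)u^*$, which already presupposes $X=upu^*$, $Y=uqu^*$; so as written the argument is circular. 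The relation $a\parallel u$ genuinely requires the equality $XY=-YX$ coming from the \emph{first} inequality in (\ref{qBWpf}), used as a matrix equation before any reduction to scalars, not after.

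The paper sidesteps the alignment problem entirely. One application of Lemma~\ref{submult} gives $X=au^*$, $Y=ub^*$ with unit $a,b,u$; then $YX=u(b^*a)u^*$ directly, so $\|YX\|=1$ reads $\|b^*a\|=1$, and Lemma~\ref{qCS} (not a second use of Lemma~\ref{submult}) gives $b=a\sigma$. Equality in the first step of (\ref{qBWpf}) now forces $XY=-YX$, i.e., $a\sigma^*a^*=-u\sigma^*u^*$; sandwiching by $u^*(\cdot)u$ produces the scalar identity $(u^*a)\sigma^*(u^*a)^*=-\sigma^*$, whence $\|u^*a\|=1$, and a \emph{second} use of Lemma~\ref{qCS} yields $a=up$. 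Setting $q:=\sigma^*p^*$ gives $X=upu^*$, $Y=uqu^*$, and the anticommutation $p\sigma^*=-\sigma^*p$ forces $p,q$ to be imaginary and orthogonal, exactly as your closing argument shows.
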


\begin{rem}
The maximal pair $(X,Y)$ can be rewritten as $X=U\diag(p,0,\cdots,0)U^*$, $Y=U\diag(q,0,\cdots,0)U^*$ for some quaternionic unitary matrix $U\in Sp(n)$. The condition on $p,q$ is equivalent to the anti-commutativity $pq=-qp$, which cannot happen in the real or complex cases.
\end{rem}
\begin{proof}
Without loss of generality, we assume $\|X\|=\|Y\|=1$. It follows from the last inequality of (\ref{qBWpf}) that $\|XY\|=1$, and thus from Lemma \ref{submult} that $X=au^*$ and $Y=ub^*$ for some unit column vectors $a,b,u\in\mathbb{H}^n$. Again by the last inequality of (\ref{qBWpf}), $\|YX\|^2=\|b^*a\|^2=1$, which by Lemma \ref{qCS} implies that $b=a\sigma$ for some unit quaternion $\sigma\in\mathbb{H}$. Then by the first inequality of (\ref{qBWpf}), we have $$u\sigma^*u^*=YX=-XY=-a\sigma^*a^*.$$
Thus $\sigma=-(u^*a)\sigma (u^*a)^*$, which implies that $\|u^*a\|=1$ and thus by Lemma \ref{qCS} again we have $a=up$ for some unit quaternion $p\in\mathbb{H}$.
Then $\sigma=-p\sigma p^*$, i.e., $p\sigma=-\sigma p$. It is easily seen that $\Re p=\Re \sigma=0$. Let $q=\sigma^*p^*\in\mathbb{H}$. Then $\Re q=0$, $pq=-qp$, $X=upu^*$ and $Y=uqu^*$.
The proof is complete.
\end{proof}

Now we come to prove the DDVV-type inequality for quaternionic matrices.
\begin{thm}\label{TqDDVV}
Let $B_1,\cdots,B_m$ be arbitrary $n\times n$ quaternionic matrices.
\begin{enumerate}
\item If $m\geq3$, then
\begin{equation*}\label{qDDVV}
\sum^m_{r,s=1}\|\[B_r,B_s\]\|^2\leq \frac83\(\sum^m_{r=1}\|B_r\|^2\)^2.
\end{equation*}
For $1\leq r \leq m$, let $B_r^1=\frac12(\Psi(B_r)+\Psi(B_r)^*), B_r^2=\frac12(\Psi(B_r)-\Psi(B_r)^*)$.
The equality holds if and only if $\sum^m_{r=1}\[\Psi(B_r),\Psi(B_r)^*\]=0$
and there exists a $(P,R)\in K(n, 2m)$ such that
$$(P,R)\cdot(B_1^1, \cdots, B_m^1, \mathbf{i}B_1^2, \cdots, \mathbf{i}B_m^2)=
(\diag(H_1,0), \diag(H_2,0), \diag(H_3,0), 0, \cdots, 0),$$ where for some $\lambda\geq0$,
$$H_1:=
\begin{pmatrix}
\lambda & 0 \\
0 & -\lambda \\
\end{pmatrix}, \quad
H_2:=
\begin{pmatrix}
0 & \lambda \\
\lambda & 0 \\
\end{pmatrix}, \quad
H_3:=
\begin{pmatrix}
0 & -\lambda\mathbf{i} \\
\lambda\mathbf{i} & 0 \\
\end{pmatrix}.
$$

\item If $m=2$, then
$$\sum^2_{r,s=1}\|\[B_r,B_s\]\|^2\leq 2\(\sum^2_{r=1}\|B_r\|^2\)^2. $$
The equality holds if and only if  $B_1=upu^*$ and $B_2=uqu^*$ for some unit column vector $u\in\mathbb{H}^n$, where $p,q\in\mathbb{H}$ are orthogonal imaginary quaternions and $\|p\|=\|q\|$.
\end{enumerate}
\end{thm}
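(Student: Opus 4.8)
The plan is to reuse the homomorphism $\Psi$ exactly as $\Phi$ was used in Section \ref{sec2}. For part (1), I would first apply the identities (\ref{qDDVV2}) and (\ref{qDDVV3}): since $\Psi$ is a $*$-algebra homomorphism preserving commutators and scaling the norm by $\sqrt 2$, one gets
$$\sum^m_{r,s=1}\|[B_r,B_s]\|^2=\tfrac12\sum^m_{r,s=1}\|[\Psi(B_r),\Psi(B_s)]\|^2,\qquad \sum^m_{r=1}\|B_r\|^2=\tfrac12\sum^m_{r=1}\|\Psi(B_r)\|^2.$$
Now $\Psi(B_1),\dots,\Psi(B_m)$ are arbitrary complex $(2n)\times(2n)$ matrices, so Theorem \ref{TcDDVV}(1) gives $\sum_{r,s}\|[\Psi(B_r),\Psi(B_s)]\|^2\le \tfrac43(\sum_r\|\Psi(B_r)\|^2)^2$. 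Substituting,
$$\sum^m_{r,s=1}\|[B_r,B_s]\|^2=\tfrac12\sum_{r,s}\|[\Psi(B_r),\Psi(B_s)]\|^2\le \tfrac12\cdot\tfrac43\Big(\sum_r\|\Psi(B_r)\|^2\Big)^2=\tfrac23\Big(2\sum_r\|B_r\|^2\Big)^2=\tfrac83\Big(\sum_r\|B_r\|^2\Big)^2,$$
which is the asserted inequality. The equality condition then comes for free: equality forces equality in the complex DDVV-type inequality \eqref{cDDVV} applied to $\Psi(B_1),\dots,\Psi(B_m)$, and Theorem \ref{TcDDVV}(1) translates this (with $\Psi(B_r)^1=\frac12(\Psi(B_r)+\Psi(B_r)^*)$, etc.) into precisely the stated condition $\sum_r[\Psi(B_r),\Psi(B_r)^*]=0$ together with the $K(n,2m)$-normal form; I would just cite the equality case of Theorem \ref{TcDDVV}(1) verbatim, noting $\Psi$ is injective so the normal form on the $\Psi(B_r)$-side is the statement to record.

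For part (2) with $m=2$, the inequality itself follows the same way from Theorem \ref{TqBW2} (or Theorem \ref{TqBW}) and the arithmetic–geometric mean step: $\sum_{r,s=1}^2\|[B_r,B_s]\|^2=2\|[B_1,B_2]\|^2\le 8\|B_1\|^2\|B_2\|^2\le 2(\|B_1\|^2+\|B_2\|^2)^2$. For the equality characterization I would trace when both inequalities are tight: the AM–GM step forces $\|B_1\|=\|B_2\|$, and the BW-type equality (Theorem \ref{TqBW2}) forces $B_1=upu^*$, $B_2=uqu^*$ for a unit $u\in\mathbb H^n$ with $p,q$ orthogonal imaginary quaternions; conversely one checks directly that such a pair achieves equality. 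This gives exactly the stated condition.

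The only genuine subtlety — the part I would be most careful about — is the bookkeeping in the equality condition of part (1): one must verify that the Hermitian/skew-Hermitian splitting $B_r^1=\frac12(\Psi(B_r)+\Psi(B_r)^*)$, $B_r^2=\frac12(\Psi(B_r)-\Psi(B_r)^*)$ used here is consistent with the splitting $B_r=B_r^1+B_r^2$ appearing inside the proof of Theorem \ref{TcDDVV}(1), so that the $K(n,2m)$-normal form and the vanishing condition $\sum_r[\Psi(B_r),\Psi(B_r)^*]=0$ can be quoted without modification; this is a matter of matching notation rather than a new argument. Everything else is a routine transcription of the complex-case results through $\Psi$, entirely parallel to the proof of Theorem \ref{Thm-Hermitian}, so I would keep the write-up short and refer back to Section \ref{sec2} for the mechanics.
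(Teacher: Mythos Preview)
Your proposal is correct and follows essentially the same route as the paper: for part~(1) you push the problem through $\Psi$ via (\ref{qDDVV2})--(\ref{qDDVV3}) and invoke Theorem~\ref{TcDDVV}(1), exactly as the paper does, and for part~(2) you reduce to Theorem~\ref{TqBW2} plus the AM--GM step, which is precisely what the paper means by ``implied by Theorem~\ref{TqBW2}.'' The bookkeeping concern you flag about the Hermitian/skew-Hermitian splitting is indeed just a matter of matching notation with the proof of Theorem~\ref{TcDDVV}, and the paper handles it the same way---by simply citing that equality condition.
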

\begin{proof}
The case (2) is implied by Theorem \ref{TqBW2}. Case (1) follows the same way as Theorem \ref{Thm-Hermitian}, but uses the fitting DDVV-type inequality with a different constant.

\end{proof}

\begin{rem}
Let $B_1=\mathbf{i}, B_2=\mathbf{j}, B_3=\mathbf{k}$, then
$$\sum^3_{r,s=1}\|\[B_r,B_s\]\|^2\ = \frac83\(\sum^3_{r=1}\|B_r\|^2\)^2, $$
$$\sum^2_{r,s=1}\|\[B_r,B_s\]\|^2\ = 2\(\sum^2_{r=1}\|B_r\|^2\)^2. $$
Hence the optimal constants for the quaternionic matrices case
and the quaternionic skew-Hermitian matrices case are both
$\frac83$ for $m\geq3$, and $2$ for $m=2$. The equality condition could be also written in quaternion domain as in Theorem \ref{TqBW2}. A maximal triple $(B_1,B_2,B_3)$ determines the $(P, R)$-action, and all others must be zero. The surviving matrices should be in the form
$$B_r=uq_ru^*\in M(n,\mathbb{H}), \quad r=1,2,3,$$
for some unit column vector $u\in\mathbb{H}^n$ and $q_1,q_2,q_3\in\mathbb{H}$ are orthogonal imaginary quaternions with the same norm.
Also note that one can try to tackle the transition from the real to complex matrices in a similar, natural way. However, the doubled constant turns out not to be sharp in this case.
\end{rem}

\section{DDVV-type inequality for Clifford system and Clifford algebra}\label{sec4}
As introduced in Section \ref{sec-int}, inspired by the Erd\H{o}s-Mordell inequality, in this section we establish the DDVV-type inequalities for matrices in the subspaces spanned by a Clifford system or a Clifford algebra.
The following lemma will be used in the proof of both Theorem \ref{TCsDDVV} and Theorem \ref{TCaDDVV}.
\begin{lem}\label{Clifford}
Let $I, R$ be finite index sets.
For any $i\in I$ and any $r\in R$, $b^r_i$ is a real number.
Then
$$\sum_{r, s\in R}\(\sum_{i\in I}b^r_ib^s_i\)^2
\geq \frac1N\(\sum_{r\in R}\sum_{i\in I}(b^r_i)^2\)^2, $$
where $N=\min\{|I|, |R|\}$. 
The condition for equality has two cases.

Case $(1)$: When $|I|\leq |R|$, the equality holds if and only if
$\sum_{r\in R}b^r_ib^r_j=0$ for any $i, j\in I$ with $i\neq j$, and
$\sum_{r\in R}(b^r_i)^2$ has the same value for all $i\in I$.

Case $(2)$: When $|I|\geq |R|$, the equality holds if and only if
$\sum_{i\in I}b^r_ib^s_i=0$ for any $r, s\in R$ with $r\neq s$, and
$\sum_{i\in I}(b^r_i)^2$ has the same value for all $r\in R$.

\end{lem}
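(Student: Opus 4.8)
The plan is to recast the inequality in linear-algebraic form. Assemble the numbers into the $|R|\times|I|$ real matrix $B=(b^r_i)$ and set $M:=BB^t$, a symmetric positive semi-definite $|R|\times|R|$ matrix. Since $(BB^t)_{rs}=\sum_{i\in I}b^r_ib^s_i$, the left-hand side of the asserted inequality is precisely $\|M\|^2=\tr(M^2)$, while $\sum_{r\in R}\sum_{i\in I}(b^r_i)^2=\|B\|^2=\tr(BB^t)=\tr M$. Moreover $\rank M=\rank B\leq\min\{|I|,|R|\}=N$. So the claim reduces to the spectral inequality $\tr(M^2)\geq\frac1N(\tr M)^2$ for a symmetric positive semi-definite matrix of rank at most $N$. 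Diagonalizing $M$ with eigenvalues $\mu_1\geq\cdots\geq\mu_{|R|}\geq0$, of which exactly $\rho:=\rank M\leq N$ are positive, the Cauchy--Schwarz inequality applied to $(\mu_1,\dots,\mu_\rho)$ and $(1,\dots,1)$ gives
$$(\tr M)^2=\(\sum_{k=1}^{\rho}\mu_k\)^2\leq\rho\sum_{k=1}^{\rho}\mu_k^2=\rho\,\tr(M^2)\leq N\,\tr(M^2),$$
which is the desired bound.

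For the equality discussion, tracing the two inequalities in the displayed chain shows that equality holds if and only if (a) all positive eigenvalues of $M$ equal a common value $\lambda\geq0$ and (b) either $\lambda=0$ (forcing $M=0$, hence $B=0$) or $\rho=N$; equivalently, $M$ has exactly $N$ nonzero eigenvalues, all equal to $\lambda$, or else is the zero matrix. I would then split into the two cases of the statement. If $|I|\leq|R|$ (so $N=|I|$): using that $BB^t$ and $B^tB$ share the same nonzero spectrum and that $B^tB$ is $|I|\times|I|$, this equality condition is equivalent to $B^tB=\lambda I_{|I|}$, that is, $\sum_{r\in R}b^r_ib^r_j=\lambda\delta_{ij}$ for all $i,j\in I$, which is exactly Case (1) (the degenerate $\lambda=0$ being subsumed). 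If $|I|\geq|R|$ (so $N=|R|$): the condition says $M=BB^t$ itself is full rank with all eigenvalues equal to $\lambda$, i.e. $BB^t=\lambda I_{|R|}$, that is, $\sum_{i\in I}b^r_ib^s_i=\lambda\delta_{rs}$ for all $r,s\in R$, which is Case (2). Conversely, substituting either condition and computing $\tr M=N\lambda$, $\tr(M^2)=N\lambda^2$ turns the inequality into an equality.

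I expect the inequality proper to be entirely routine once it is phrased spectrally; the only point requiring care is the equality analysis --- in particular, correctly absorbing the degenerate case $\lambda=0$ (that is, $B=0$) into both equality statements, and using the identity $\rank(BB^t)=\rank(B^tB)=\rank B$ to pass between the $|R|\times|R|$ description of Case (2) and the $|I|\times|I|$ description of Case (1). Beyond this bookkeeping no serious obstacle is anticipated.
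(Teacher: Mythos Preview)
Your proof is correct and is essentially the paper's own approach: the paper itself, in the Remark immediately following the lemma, rewrites the statement as $\|BB^t\|^2=\|B^tB\|^2\geq\frac1N\|B\|^4$ with equality iff $B^tB=\lambda I_N$ or $BB^t=\lambda I_N$, which is exactly your formulation. The only cosmetic difference is that the paper's formal proof argues entrywise (pass to the smaller Gram matrix, drop the off-diagonal squared entries, then apply Cauchy--Schwarz to the diagonal) rather than spectrally via the rank bound and Cauchy--Schwarz on eigenvalues; the equality analyses coincide.
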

\begin{proof}
Case (1): When $|I|\leq |R|$, by the Cauchy inequality,
$$\sum_{r, s\in R}\(\sum_{i\in I}b^r_ib^s_i\)^2
=\sum_{i, j\in I}\(\sum_{r\in R}b^r_ib^r_j\)^2
\geq \sum_{i\in I}\(\sum_{r\in R}(b^r_i)^2\)^2
\geq \frac1{|I|}\(\sum_{i\in I}\sum_{r\in R}(b^r_i)^2\)^2. $$

Case(2): When $|I|\geq |R|$, also by the Cauchy inequality,
$$\sum_{r, s\in R}\(\sum_{i\in I}b^r_ib^s_i\)^2
\geq \sum_{r\in R}\(\sum_{i\in I}(b^r_i)^2\)^2
\geq \frac1{|R|}\(\sum_{r\in R}\sum_{i\in I}(b^r_i)^2\)^2. $$
The equality condition follows immediately from that of the Cauchy inequality.
\end{proof}
\begin{rem}
Let $B=(b^r_s)\in M(|R|, |I|)$. Then the inequality of the lemma is just
$$\|BB^t\|^2=\|B^tB\|^2\geq \frac{1}{N}\|B\|^4, \quad N=\min\{|I|, |R|\}.$$
The equality holds if and only if either $B^tB=\lambda I_N$ or $BB^t=\lambda I_N$ for $\lambda\geq0$.
\end{rem}


\begin{thm}\label{TCsDDVV}
Let $(P_0, P_1, \cdots, P_m)$ be a Clifford system on $\mathbb{R}^{2l}$, i.e., $P_0,\cdots,P_m\in O(2l)$ are real symmetric orthogonal matrices satisfying $P_iP_j+P_jP_i=2\delta_{ij}I_{2l}$.
Let $B_1,\cdots,B_M \in span\{P_0, P_1, \cdots, P_m\}$, then
\begin{equation}\label{CsDDVV}
\sum^M_{r,s=1}\|\[B_r,B_s\]\|^2\leq \frac2l\(1-\frac1N\)\(\sum^M_{r=1}\|B_r\|^2\)^2, \quad
N=\min\{m+1, M\}.
\end{equation}
The condition for equality has two cases.

Case $(1)$: When $m+1\leq M$, the equality holds if and only if
$p_0, \cdots, p_{m}$ are orthogonal vectors with the same norm,
where $p_i:=\(\<P_i, B_1\>, \cdots, \<P_i, B_M\>\)\in\mathbb{R}^M$.

Case $(2)$: When $m+1\geq M$, the equality holds if and only if
$B_1,\cdots,B_M$ are orthogonal matrices with the same Frobenius norm.

\end{thm}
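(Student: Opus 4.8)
The plan is to expand each $B_r$ in the orthogonal basis $P_0,\dots,P_m$ spanning the given subspace, turn the left-hand side of (\ref{CsDDVV}) into an exact algebraic identity, and then apply Lemma~\ref{Clifford} in the matrix form stated in the remark after it.

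First I would record the orthogonality relations forced by $P_iP_j+P_jP_i=2\delta_{ij}I_{2l}$. Each $P_i$ is symmetric, so $\<P_i,P_j\>=\tr(P_iP_j)$; for $i\neq j$ cyclicity and anticommutativity give $\tr(P_iP_j)=-\tr(P_iP_j)=0$, hence $\<P_i,P_j\>=2l\,\delta_{ij}$. Similarly, for $0\le i<j\le m$ and $0\le k<l\le m$ one computes $\<P_iP_j,P_kP_l\>=\tr(P_iP_jP_lP_k)$, which equals $\tr(I_{2l})=2l$ when $(i,j)=(k,l)$ and vanishes otherwise (one common index reduces, after sliding the repeated factor and using cyclicity, to $\pm\tr(P_aP_b)=0$ with $a\neq b$; no common index gives $-\tr(P_iP_jP_lP_k)=0$ after two anticommutations and a cyclic shift). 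Thus $\{\,2P_iP_j:0\le i<j\le m\,\}$ is an orthogonal family of squared norm $8l$. Writing $B_r=\sum_{i=0}^m b^r_iP_i$ with $b^r_i\in\mathbb{R}$ and using $[P_i,P_j]=2P_iP_j$ for $i\neq j$, these relations give, with $i,j$ running over $0,\dots,m$,
\begin{equation*}
\|B_r\|^2=2l\sum_i (b^r_i)^2,\qquad
\|[B_r,B_s]\|^2=4l\sum_{i,j}\big(b^r_ib^s_j-b^r_jb^s_i\big)^2 .
\end{equation*}

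Next, let $B:=(b^r_i)\in M(M,m+1)$ be the real coefficient matrix, so that $\sum_r\|B_r\|^2=2l\|B\|^2$, while expanding the square above and summing over $r,s$ (every cross term collapses into the Gram entries $\sum_r b^r_ib^r_j$) gives the identity
\begin{equation*}
\sum_{r,s=1}^M\|[B_r,B_s]\|^2 = 8l\big(\|B\|^4-\|B^tB\|^2\big).
\end{equation*}
By the remark following Lemma~\ref{Clifford}, $\|B^tB\|^2\ge\tfrac1N\|B\|^4$ with $N=\min\{m+1,M\}$, whence
\begin{equation*}
\sum_{r,s=1}^M\|[B_r,B_s]\|^2\le 8l\Big(1-\tfrac1N\Big)\|B\|^4
=\tfrac2l\Big(1-\tfrac1N\Big)\Big(\sum_{r=1}^M\|B_r\|^2\Big)^2 ,
\end{equation*}
which is (\ref{CsDDVV}).

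Finally, equality holds iff equality holds in Lemma~\ref{Clifford}, i.e. (by the remark) iff $B^tB=\lambda I_{m+1}$ or $BB^t=\lambda I_M$ for some $\lambda\ge0$. Since $\<P_i,B_r\>=2l\,b^r_i$ and $\<B_r,B_s\>=2l\sum_i b^r_ib^s_i$, the condition $B^tB=\lambda I_{m+1}$ translates into ``$p_0,\dots,p_m$ orthogonal with common norm'', where $p_i=(\<P_i,B_1\>,\dots,\<P_i,B_M\>)$, and $BB^t=\lambda I_M$ translates into ``$B_1,\dots,B_M$ orthogonal with common norm''; in the ranges $m+1\le M$ resp. $m+1\ge M$ the relevant alternative from the remark is the first resp. the second, giving Case (1) resp. Case (2) (the two descriptions coincide when $m+1=M$, since a square matrix satisfies $B^tB=\lambda I\iff BB^t=\lambda I$). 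The only step requiring genuine care is the trace computation showing the $2P_iP_j$ are pairwise orthogonal of the same norm; once that is in hand, everything else is bookkeeping together with the cited matrix form of Lemma~\ref{Clifford}.
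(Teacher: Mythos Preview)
Your proof is correct and follows essentially the same approach as the paper: expand each $B_r$ in the basis $P_0,\dots,P_m$, compute $\|[B_r,B_s]\|^2$ via the Clifford relations, and reduce (\ref{CsDDVV}) to the inequality of Lemma~\ref{Clifford} on the coefficient array. The only difference is packaging: you first isolate the orthogonality of $\{P_iP_j:i<j\}$ and then phrase everything in terms of the coefficient matrix $B=(b^r_i)$ and the remark after Lemma~\ref{Clifford}, whereas the paper carries out the trace computation directly via an inclusion--exclusion $\Delta_1-\Delta_2-\Delta_3-\Delta_4$ and invokes Lemma~\ref{Clifford} in its summation form; both arrive at the same identity $\sum_{r,s}\|[B_r,B_s]\|^2=8l(\|B\|^4-\|B^tB\|^2)$ and the same equality analysis. (A tiny quibble: the ``no common index'' trace vanishes after a cyclic shift and \emph{three} anticommutations, not two, but the conclusion is of course correct.)
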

\begin{proof}
For $1\leq r \leq M$, let $B_r=\sum^{m}_{i=0}b^r_iP_i$,
where $b^r_0, \cdots, b^r_m$ are real numbers. Since
\begin{equation}\label{CsDDVV2}
P_iP_j+P_jP_i=2\delta_{ij}I_{2l}, \quad i,j=0,\cdots,m,
\end{equation}
we have
$$\[B_r,B_s\]=\sum^{m}_{i,j=0, i\neq j}b^r_ib^s_j\[P_i,P_j\]
=2\sum^{m}_{i,j=0, i\neq j}b^r_ib^s_jP_iP_j, $$
$$\|\[B_r,B_s\]\|^2=4\left\|\sum^{m}_{i,j=0, i\neq j}b^r_ib^s_jP_iP_j\right\|^2
=4\sum^{m}_{\substack{i, j, k, l=0,\\ i\neq j, k\neq l}}b^r_ib^s_jb^r_kb^s_l\tr(P_iP_jP_lP_k). $$
By (\ref{CsDDVV2}) and $\tr(P_iP_j)=\tr(P_jP_i)$ for all $0\leq i,j\leq m$, we have
$$\begin{aligned}
\Delta_1:&=\sum^{m}_{i, j, k, l=0}b^r_ib^s_jb^r_kb^s_l\tr(P_iP_jP_lP_k)\\
&=\sum^{m}_{\substack{i, j, k, l=0,\\ j\neq l}}b^r_ib^s_jb^r_kb^s_l\tr(P_iP_jP_lP_k)
+\sum^{m}_{i, j, k=0}b^r_i(b^s_j)^2b^r_k\tr(P_iP_k)\\
&=\sum^{m}_{\substack{i, j, k, l=0,\\ j<l}}b^r_ib^s_jb^r_kb^s_l\tr(P_i(P_jP_l+P_lP_j)P_k)
+\sum^{m}_{i, j=0}(b^r_i)^2(b^s_j)^2\tr(I_{2l})
=2l\sum^{m}_{i, j=0}(b^r_ib^s_j)^2,
\end{aligned}$$
$$\Delta_2:=\sum^{m}_{\substack{i, k, l=0,\\ k\neq l}}b^r_ib^s_ib^r_kb^s_l\tr(P_iP_iP_lP_k)
=\sum^{m}_{\substack{i, k, l=0,\\ k\neq l}}b^r_ib^s_ib^r_kb^s_l\tr(P_lP_k)=0, $$
$$\Delta_3:=\sum^{m}_{\substack{i, j, k=0,\\ i\neq j}}b^r_ib^s_jb^r_kb^s_k\tr(P_iP_jP_kP_k)
=\sum^{m}_{\substack{i, j, k=0,\\ i\neq j}}b^r_ib^s_jb^r_kb^s_k\tr(P_iP_j)=0, $$
and
$$\Delta_4:=\sum^{m}_{i, k=0}b^r_ib^s_ib^r_kb^s_k\tr(P_iP_iP_kP_k)
=\sum^{m}_{i, k=0}b^r_ib^s_ib^r_kb^s_k\tr(I_{2l})=2l\(\sum^{m}_{i=0}b^r_ib^s_i\)^2. $$
Therefore, by
\begin{equation}\label{sumdec}
\sum^{}_{\substack{i, j, k, l}}=
\sum^{}_{\substack{i, j, k, l,\\i=j,  k=l}}+
\sum^{}_{\substack{i, j, k, l,\\i=j,  k\neq l}}+
\sum^{}_{\substack{i, j, k, l,\\i\neq j,k= l}}+
\sum^{}_{\substack{i, j, k, l,\\i\neq j,k\neq l}},
\end{equation}
we have
\begin{equation}\label{CsDDVV3}
\|\[B_r,B_s\]\|^2
=4(\Delta_1-\Delta_2-\Delta_3-\Delta_4)
=8l\(\sum^{m}_{i, j=0}(b^r_ib^s_j)^2-\(\sum^{m}_{i=0}b^r_ib^s_i\)^2\).
\end{equation}
Still by (\ref{CsDDVV2}), we have
\begin{equation}\label{CsDDVV4}
\|B_r\|^2=\sum^{m}_{i=0}(b^r_i)^2\|P_i\|^2=\sum^{m}_{i=0}(b^r_i)^2\tr(I_{2l})=2l\sum^{m}_{i=0}(b^r_i)^2.
\end{equation}
Combining (\ref{CsDDVV3}) and (\ref{CsDDVV4}), the inequality (\ref{CsDDVV}) is transformed into the following:
$$\sum^M_{r, s=1}\sum^{m}_{i, j=0}(b^r_ib^s_j)^2
-\sum^M_{r, s=1}\(\sum^{m}_{i=0}b^r_ib^s_i\)^2
\leq \(1-\frac1N\)\(\sum^M_{r=1}\sum^{m}_{i=0}(b^r_i)^2\)^2, $$
i.e.,
$$\sum^M_{r, s=1}\(\sum^{m}_{i=0}b^r_ib^s_i\)^2
\geq \frac1N\(\sum^M_{r=1}\sum^{m}_{i=0}(b^r_i)^2\)^2,$$
which is implied by Lemma \ref{Clifford} and so is the equality condition.

The proof is complete.
\end{proof}

Analogously we are able to obtain the DDVV-type inequality for Clifford algebra.
\begin{thm}\label{TCaDDVV}
Let $\{E_1, \cdots, E_{m-1}\}$ be a Clifford algebra on $\mathbb{R}^l$, i.e.,  $E_1,\cdots,E_{m-1}\in O(l)$ are real skew-symmetric orthogonal matrices satisfying $E_iE_j+E_jE_i=-2\delta_{ij}I_{l}$. Let $B_1,\cdots,B_M \in span\{E_1, \cdots, E_{m-1}\}$, then
\begin{equation}\label{CaDDVV}
\sum^M_{r,s=1}\|\[B_r,B_s\]\|^2\leq \frac4l\(1-\frac1N\)\(\sum^M_{r=1}\|B_r\|^2\)^2,\quad
N=\min\{m-1, M\}.
\end{equation}
The condition for equality has two cases.

Case $(1)$: When $m-1\leq M$, the equality holds if and only if
$e_1, \cdots, e_{m-1}$ are orthogonal with the same norm,
where $e_i:=\(\<E_i, B_1\>, \cdots, \<E_i, B_M\>\)\in\mathbb{R}^M$.

Case $(2)$: When $m-1\geq M$, the equality holds if and only if
$B_1,\cdots,B_M$ are orthogonal with the same norm.

\end{thm}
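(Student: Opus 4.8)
The plan is to imitate, step by step, the proof of Theorem~\ref{TCsDDVV}, replacing the Clifford system identities $P_iP_j+P_jP_i=2\delta_{ij}I_{2l}$ by the Clifford algebra identities $E_iE_j+E_jE_i=-2\delta_{ij}I_l$ and keeping careful track of the extra minus signs coming both from these identities and from the skew-symmetry $E_i^t=-E_i$. First I would write $B_r=\sum_{i=1}^{m-1}b^r_iE_i$ with $b^r_i\in\mathbb{R}$. Since $E_iE_j=-E_jE_i$ for $i\neq j$, we have $[E_i,E_j]=2E_iE_j$, hence
$$[B_r,B_s]=2\sum_{\substack{i,j=1,\ i\neq j}}^{m-1}b^r_ib^s_jE_iE_j,$$
and, using $(E_kE_l)^t=E_l^tE_k^t=E_lE_k$,
$$\|[B_r,B_s]\|^2=4\sum_{\substack{i\neq j,\ k\neq l}}b^r_ib^s_jb^r_kb^s_l\,\tr(E_iE_jE_lE_k).$$

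Next I would split this sum according to whether $i=j$ and whether $k=l$, writing it as $4(\Delta_1-\Delta_2-\Delta_3-\Delta_4)$ exactly as in the proof of Theorem~\ref{TCsDDVV}, where $\Delta_1$ is the unrestricted sum, $\Delta_2$ (resp. $\Delta_3$) restricts to $i=j$, $k\neq l$ (resp. $i\neq j$, $k=l$), and $\Delta_4$ restricts to $i=j$ and $k=l$. The inputs are $E_iE_i=-I_l$, $\tr(I_l)=l$, and $\tr(E_iE_k)=-l\delta_{ik}$ (obtained by taking the trace of the defining relation). Symmetrizing the $j,l$ summation in $\Delta_1$ against $E_jE_l+E_lE_j=-2\delta_{jl}I_l$ gives $\Delta_1=l\sum_{i,j}(b^r_ib^s_j)^2$; the off-diagonal trace $\tr(E_lE_k)$ (resp. $\tr(E_iE_j)$) vanishes for $k\neq l$ (resp. $i\neq j$), so $\Delta_2=\Delta_3=0$; and $\Delta_4=l(\sum_ib^r_ib^s_i)^2$ since $E_iE_iE_kE_k=I_l$. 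Therefore
$$\|[B_r,B_s]\|^2=4l\left(\sum_{i,j=1}^{m-1}(b^r_ib^s_j)^2-\left(\sum_{i=1}^{m-1}b^r_ib^s_i\right)^2\right).$$

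Then, from $\|E_i\|^2=\tr(-E_i^2)=\tr(I_l)=l$ and the orthogonality of the $E_i$, I get $\|B_r\|^2=l\sum_{i=1}^{m-1}(b^r_i)^2$. Substituting both identities into $(\ref{CaDDVV})$ and dividing by $4l$, the claimed inequality becomes precisely
$$\sum_{r,s=1}^M\left(\sum_{i=1}^{m-1}b^r_ib^s_i\right)^2\ \ge\ \frac1N\left(\sum_{r=1}^M\sum_{i=1}^{m-1}(b^r_i)^2\right)^2,$$
which is Lemma~\ref{Clifford} applied with $I=\{1,\dots,m-1\}$, $R=\{1,\dots,M\}$, $N=\min\{m-1,M\}$. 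The equality cases transfer via $\langle B_r,E_i\rangle=l\,b^r_i$ and $\langle B_r,B_s\rangle=l\sum_ib^r_ib^s_i$: Case~(1) of the lemma ($m-1\le M$) asserts $\sum_rb^r_ib^r_j=0$ for $i\neq j$ and $\sum_r(b^r_i)^2$ independent of $i$, i.e. the vectors $e_i=(\langle E_i,B_1\rangle,\dots,\langle E_i,B_M\rangle)=l(b^1_i,\dots,b^M_i)$ are orthogonal with equal norm; Case~(2) ($m-1\ge M$) asserts $\langle B_r,B_s\rangle=0$ for $r\neq s$ and $\|B_r\|$ independent of $r$.

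I do not expect a genuine obstacle: the argument is a line-by-line analogue of Theorem~\ref{TCsDDVV}, and the only point requiring real attention is the bookkeeping of signs in the four trace computations. There, the minus sign in $E_iE_j+E_jE_i=-2\delta_{ij}I_l$ and the minus signs produced by transposing skew-symmetric matrices conspire so that the final identity for $\|[B_r,B_s]\|^2$ retains a positive coefficient $4l$ and the same shape as in the Clifford-system case — with $2l$ replaced throughout by $l$ because $\tr(I_l)=l$ — which is exactly what makes the optimal constant $\tfrac4l(1-\tfrac1N)$ twice that of Theorem~\ref{TCsDDVV}.
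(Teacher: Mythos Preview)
Your proposal is correct and follows essentially the same approach as the paper's own proof: expand $B_r$ in the $E_i$, compute $\|[B_r,B_s]\|^2$ via the four-term splitting $\Delta_1-\Delta_2-\Delta_3-\Delta_4$, use the Clifford relations to evaluate each $\Delta_i$, and reduce the resulting inequality to Lemma~\ref{Clifford}. The only differences are cosmetic---you justify the appearance of $\tr(E_iE_jE_lE_k)$ via $(E_kE_l)^t=E_lE_k$ and you spell out the translation of the equality conditions through $\langle B_r,E_i\rangle=l\,b^r_i$ a bit more explicitly than the paper does.
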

\begin{proof}
For $1\leq r \leq M$, let $B_r=\sum^{m-1}_{i=1}b^r_iE_i$,
where $b^r_1, \cdots, b^r_{m-1}$ are real numbers.
Since
\begin{equation}\label{CaDDVV2}
E_iE_j+E_jE_i=-2\delta_{ij}I_{l}, \quad i,j=1,\cdots,m-1,
\end{equation}
we have
$$\[B_r,B_s\]=\sum^{m-1}_{i,j=1, i\neq j}b^r_ib^s_j\[E_i,E_j\]
=2\sum^{m-1}_{i,j=1, i\neq j}b^r_ib^s_jE_iE_j, $$
$$\|\[B_r,B_s\]\|^2=4\left\|\sum^{m-1}_{i,j=1, i\neq j}b^r_ib^s_jE_iE_j\right\|^2
=4\sum^{m-1}_{\substack{i, j, k, l=1,\\ i\neq j, k\neq l}}b^r_ib^s_jb^r_kb^s_l\tr(E_iE_jE_lE_k). $$
By (\ref{CaDDVV2}) and $\tr(E_iE_j)=\tr(E_jE_i)$ for all $1\leq i,j\leq m-1$, we have
$$\begin{aligned}
\Delta_1:&=\sum^{m-1}_{i, j, k, l=1}b^r_ib^s_jb^r_kb^s_l\tr(E_iE_jE_lE_k)\\
&=\sum^{m-1}_{\substack{i, j, k, l=1,\\ j\neq l}}b^r_ib^s_jb^r_kb^s_l\tr(E_iE_jE_lE_k)
+\sum^{m-1}_{i, j, k=1}b^r_i(b^s_j)^2b^r_k\tr(-E_iE_k)\\
&=\sum^{m-1}_{\substack{i, j, k, l=1,\\ j<l}}b^r_ib^s_jb^r_kb^s_l\tr(E_i(E_jE_l+E_lE_j)E_k)
+\sum^{m-1}_{i, j=1}(b^r_i)^2(b^s_j)^2\tr(I_{l})
=l\sum^{m-1}_{i, j=1}(b^r_ib^s_j)^2,
\end{aligned}$$
$$\Delta_2:=\sum^{m-1}_{\substack{i, k, l=1,\\ k\neq l}}b^r_ib^s_ib^r_kb^s_l\tr(E_iE_iE_lE_k)
=\sum^{m-1}_{\substack{i, k, l=1,\\ k\neq l}}b^r_ib^s_ib^r_kb^s_l\tr(-E_lE_k)=0, $$
$$\Delta_3:=\sum^{m-1}_{\substack{i, j, k=1,\\ i\neq j}}b^r_ib^s_jb^r_kb^s_k\tr(E_iE_jE_kE_k)
=\sum^{m-1}_{\substack{i, j, k=1,\\ i\neq j}}b^r_ib^s_jb^r_kb^s_k\tr(-E_iE_j)=0, $$
and
$$\Delta_4:=\sum^{m-1}_{i, k=1}b^r_ib^s_ib^r_kb^s_k\tr(E_iE_iE_kE_k)
=\sum^{m-1}_{i, j=1}b^r_ib^s_ib^r_jb^s_j\tr(I_{l})=l\(\sum^{m-1}_{i=1}b^r_ib^s_i\)^2. $$
Therefore, by the decomposition (\ref{sumdec}) of the sum about indices,
we have
\begin{equation}\label{CaDDVV3}
\|\[B_r,B_s\]\|^2
=4(\Delta_1-\Delta_2-\Delta_3-\Delta_4)
=4l\(\sum^{m-1}_{i, j=1}(b^r_ib^s_j)^2-\(\sum^{m-1}_{i=1}b^r_ib^s_i\)^2\).
\end{equation}
Still by (\ref{CaDDVV2}), we have
\begin{equation}\label{CaDDVV4}
\|B_r\|^2=\sum^{m-1}_{i=1}(b^r_i)^2\|E_i\|^2=l\sum^{m}_{i=0}(b^r_i)^2.
\end{equation}
Combining (\ref{CaDDVV3}) and (\ref{CaDDVV4}), the inequality (\ref{CaDDVV}) is transformed into the following:
$$\sum^M_{r, s=1}\sum^{m-1}_{i, j=1}(b^r_ib^s_j)^2
-\sum^M_{r, s=1}\(\sum^{m-1}_{i=1}b^r_ib^s_i\)^2
\leq \(1-\frac1N\)\(\sum^M_{r=1}\sum^{m-1}_{i=1}(b^r_i)^2\)^2, $$
i.e.,
$$\sum^M_{r, s=1}\(\sum^{m-1}_{i=1}b^r_ib^s_i\)^2
\geq \frac1N\(\sum^M_{r=1}\sum^{m-1}_{i=1}(b^r_i)^2\)^2,$$
which is implied by Lemma \ref{Clifford} and so is the equality condition.

The proof is complete.
\end{proof}

\begin{ack}
The authors would like to thank Professors Zhiqin Lu and David Wenzel for their very detailed and valuable comments.
\end{ack}


\end{document}